\newcommand{\vep}{\varepsilon}
\DeclareMathOperator{\tr}{ tr }
\DeclareMathOperator{\divergence}{ div }
\DeclareMathOperator{\Rm}{Rm}
\DeclareMathOperator{\Rc}{Rc}
\DeclareMathOperator{\Sc}{S}
\newcommand{\Mbar}{\overline{M}}
\newcommand{\gbar}{\overline{g}}
\newcommand{\pd}[1]{\frac{\partial}{\partial #1}}
\DeclareMathOperator{\FP}{\underset{z=0}{FP}}
\DeclareMathOperator{\Res}{\underset{z=0}{Res}}
\newcommand{\del}{\partial}
\newcommand{\RenV}{\mathrm{RenV}}
\newcommand{\olg}{\overline{g}}
\newcommand{\frakc}{\mathfrak c}
\newcommand{\calO}{\mathcal O}
\newtheorem{theorem}{Theorem}[section]
\newtheorem{prop}[theorem]{Proposition}
\newtheorem{lemma}[theorem]{Lemma}
\newtheorem{remark}[theorem]{Remark}
\newtheorem{definition}[theorem]{Definition}
\newtheorem{maintheorem}{Theorem}
\newtheorem{maincorollary}[maintheorem]{Corollary}
\begin{document}

\title[Renormalized volume and Ricci flow]{Renormalized
volume and \\ the evolution of APEs}

\keywords{Ricci flow, conformally compact metrics, asymptotically
hyperbolic metrics, renormalized volume, black hole thermodynamics.}

\author{Eric Bahuaud}
\address{Department of Mathematics,
Seattle University,
901 12th Ave,
Seattle, WA 98122}

\email{bahuaude(at)seattleu.edu}

\author{Rafe Mazzeo}
\address{Department of Mathematics, Stanford University,
Stanford, CA 94305}

\email{mazzeo(at)math.stanford.edu}

\author{Eric Woolgar}
\address{Department of Mathematical and Statistical Sciences,
University of Alberta, Edmonton, Alberta, T6G 2G1, Canada}
\email{ewoolgar(at)ualberta.ca}

\date{}

\begin{abstract}
\noindent We study the evolution of the renormalized volume functional for even-dimensional asymptotically Poincar\'e-Einstein
metrics $(M,g)$ under normalized Ricci flow. In particular, we prove that
\[
\frac{d\,}{dt} \RenV(M^n, g(t)) = -\int_{M^n} \left (\Sc(g(t))+n(n-1) \right )\, dV_{g(t)},
\]
where $\Sc(g(t))$ is the scalar curvature for the evolving metric $g(t)$. This implies that if
$\Sc+n(n-1) \geq 0$ at $t=0$, then $\RenV(M^n,g(t))$ decreases monotonically.
For odd-dimensional
asymptotically Poincar\'e-Einstein metrics with vanishing obstruction tensor, we find that
the conformal anomaly for these metrics is constant along the flow.

We apply our results to the Hawking-Page phase transition in black hole thermodynamics. We compute renormalized volumes for the
Einstein $4$-metrics sharing the conformal infinity of an AdS-Schwarzschild black hole. We compare these to the free energies
relative to thermal hyperbolic space, as originally computed by Hawking and Page using a different regularization technique,
and find that they are equal.

\end{abstract}

\maketitle

\section{Introduction}
\setcounter{equation}{0}
\noindent \noindent There is a well-known connection between the Riemannian geometry of noncompact Poincar\'e-Einstein
(PE) manifolds and the conformal geometry on their asymptotic boundaries, which are compact manifolds of one lower
dimension.  While relevant to the AdS/CFT correspondence in string theory \cite{Maldacena, Witten, HS, PS, dHSS}, it traces
back to earlier mathematical studies by Fefferman and Graham \cite{FG}, \cite{FG2}, who were motivated by the program
of classifying conformal invariants on the boundary in terms of the `interior' Riemannian geometry of the PE space. PE
geometry has appeared independently in various other mathematical guises too. Finally, this has other physical roots,
related to the thermodynamics of anti-de Sitter black holes \cite{HP}.

An interesting quantity in this setting is the renormalized volume of a PE space $(M^n,g)$. This is defined
by an Hadamard regularization scheme of the volumes of certain special families of compact subdomains which
exhaust $M$. A remarkable theorem, due to Henningson-Skenderis \cite{HS} and Graham-Witten \cite{GW},
see also \cite{Gr}, asserts that this renormalized volume, $\mathrm{RenV}\,(M,g)$, is well defined when
$n$ is even in the sense that it is independent of the choice of certain geometrically natural exhausting
sequence of subdomains (i.e., those for which the boundaries form equidistant families). When $n$ is odd,
one obtains a quantity no longer independent of choices, but which depends on these choices in a simple and
comprehensible way. All of this is explained in great detail in \cite{Gr}, but is recalled below.  For other, distinct forms of renormalized volume, see \cite{BC, HJS, KS}.

In this paper we study a class of spaces $(M^n, g)$ which are {\it asymptotically Poincar\'e-Einstein}
(APE) in a strong asymptotic sense.  It is still possible in this setting to define $\RenV(M,g)$
and show that it has the same invariance properties as for PE metrics.
Our goal here is to consider the behavior of these APE spaces and of their renormalized volumes
under the Ricci flow. We derive a formula for the time derivative of $\RenV(M, g(t))$, where $g(t)$
is a solution to the Ricci flow equations, and show that in certain circumstances this quantity
is monotone. We give some applications and conclude by explaining the relevance of this circle
of ideas to black hole thermodynamics. There is a well-known example of a $4$-manifold
where the same conformal structure at infinity is induced by three non-isometric PE metrics in the interior \cite{HP}.
We argue in \S 4 that the detailed consideration of phase transitions in black hole thermodynamics leads to consideration
of Ricci flow of manifolds with APE metrics and their renormalized volumes.

We shall consider the normalized Ricci flow
\begin{equation}
\begin{split}
\del_t g &=  -2 \left ( \Rc(g)+(n-1)g\right ) :=-2E(g)\ ,\ t\in[0,T) \\
g(0) & =g^0\ ,
\end{split}
\label{eq1.1}
\end{equation}
assuming that the initial metric $(M,g^0)$ is APE. Requiring that the flow remain asymptotically hyperbolic and fixing the conformal
infinity may be thought of as a ``boundary condition'' at infinity.
The short-time existence for this flow for general smooth conformally compact initial data
was obtained in \cite{Bahuaud}, see also \cite{AlbinAldanaRochon}. Another proof of short-time existence specifically
adapted to the APE setting appears in \cite{QSW}.

While the long-time behavior of conformally compact manifolds under Ricci flow is certainly no simpler than
that in the compact case, one can obtain strong control at spatial infinity in any finite time slice in this class of
spaces. Our first result is that the APE condition is preserved under the flow.  This result holds in
both even and odd dimensions.
\begin{maintheorem}\label{TheoremA}
Let $(M^n,g^0)$, $n \geq 2$, be APE, and let $g(t)$ be a solution of \eqref{eq1.1}.
Then $(M, g(t))$ remains APE on some possibly smaller interval $[0,T_0)$.
\end{maintheorem}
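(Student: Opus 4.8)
The strategy is to set up the normalized Ricci flow in the conformally compact framework, where the metric is written near the boundary as $g = x^{-2}(dx^2 + h(x))$ for a defining function $x$ and a one-parameter family $h(x)$ of metrics on level sets. The APE condition is a statement about the asymptotic expansion of $h(x)$ (equivalently, the rate at which $g$ approaches the hyperbolic model and the polyhomogeneous structure of its Fefferman–Graham expansion up to the critical order). The plan is to show that these asymptotic conditions propagate: first I would rewrite \eqref{eq1.1} as a nonlinear parabolic equation for $h(x,t)$, using that $\Rc(g) + (n-1)g$ vanishes to high order for the model and so $E(g)$ itself enjoys good decay once $g$ is APE at a fixed time; then I would run a barrier/maximum-principle or an energy argument on the appropriate weighted Hölder or $b$-type space to see that the decay rate is preserved on a short time interval.

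Concretely, the first step is to fix a background conformally compact metric $\bar g$ with the prescribed conformal infinity and work with the tensor $k(t) = g(t) - \bar g$ (or the corresponding deformation of $h(x)$), controlling it in a weighted space $x^{\delta} C^{k,\alpha}$ or a polyhomogeneous class adapted to the APE definition. Because $(M,g^0)$ is APE, $k(0)$ lies in this class; because $\bar g$ can be chosen close to Poincaré–Einstein, the flow equation for $k$ has the schematic form $\partial_t k = \Delta_\ell k + x^{\mu} Q(k, \nabla k, \nabla^2 k)$ where $\Delta_\ell$ is (essentially) the Lichnerowicz Laplacian of $\bar g$ in a suitable gauge (DeTurck trick to fix diffeomorphism freedom) and the inhomogeneity decays. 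The second step is short-time existence and uniqueness in this class — which is precisely the content of the cited results \cite{Bahuaud, QSW}, so I may invoke it — giving a solution that a priori lies in some weighted space. The third, and substantive, step is the regularity upgrade: one must show the solution does not lose decay, i.e.\ that the APE decay rate of the initial data is inherited for $t \in [0,T_0)$. Here the mechanism is that the linear operator governing the evolution has indicial roots matching the model hyperbolic metric, so modes decaying at the APE rate cannot be excited by slower-decaying terms since the forcing $x^\mu Q$ is of strictly higher order; a contraction-mapping argument on the weighted parabolic space, or a careful integration of the evolution equation for the expansion coefficients $h_{(j)}(t)$ order by order up to the critical one, closes this.

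The main obstacle I expect is controlling the interaction between the nonlinearity and the indicial/threshold behavior at the critical order in the Fefferman–Graham expansion — exactly the order $n$ coefficient (and, in odd dimensions, the obstruction-tensor order) where the expansion is only polyhomogeneous and where log terms can appear. One must check that the Ricci flow nonlinearity does not generate decay slower than the APE threshold at that order, i.e.\ that the structure of the expansion (including any log terms allowed by the APE definition) is stable. A clean way to handle this is to differentiate the flow equation and track the evolution of each expansion coefficient: the lower-order coefficients satisfy closed ODEs in $t$ driven by the conformal infinity (which is fixed), while the critical coefficient satisfies an equation whose inhomogeneity involves only already-controlled lower-order data, so its regularity/decay type is preserved. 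Combining the short-time existence from \cite{Bahuaud, QSW} with this order-by-order propagation and a standard bootstrap then yields that $(M,g(t))$ is APE on $[0,T_0)$.
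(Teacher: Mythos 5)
Your overall skeleton (invoke short-time existence from \cite{Bahuaud,QSW}, then show the asymptotic condition is not lost on a short interval) is the right one, but the step that carries the whole theorem is left as a sketch, and the route you propose for it differs from the paper's in a way that creates real difficulties. In the paper, APE is \emph{defined} as the decay $|E(g)|_g=\calO(x^n)$ for an AH metric, and the proof has three separable ingredients: (i) \cite{Bahuaud} gives that $g(t)$ stays smoothly conformally compact and AH with \emph{fixed} conformal infinity; (ii) Lemma 4.3 of \cite{QSW} (Theorem \ref{theorem2.4}) shows the decay of the Einstein tensor persists, proved by computing the evolution of $|E|^2$, rescaling $\overline E=x^{-n}E$, bounding curvature via Shi's estimates, and applying the Ecker--Huisken maximum principle on the complete noncompact manifold; (iii) a purely \emph{static} formal lemma (Proposition \ref{proposition2.2}, following \cite{GrahamHirachi}) says that any smoothly compactifiable AH metric with $|E|=\calO(x^n)$ automatically has the PE-type expansion, with all coefficients through order $n-2$ (and the trace conditions at order $n-1$) determined by $g_0$. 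Since $g_0$ is constant along the flow, constancy of the low-order coefficients is an immediate corollary; no expansion coefficient is ever propagated in time.

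The concrete gaps in your plan are these. First, the decay-preservation step is exactly where the work lies, and you offer only alternatives (``barrier/maximum-principle or energy argument'', ``contraction mapping or order-by-order ODEs'') while explicitly acknowledging, without resolving, the obstacle at the critical order; the indicial-root heuristic for the linearized operator does not by itself give a parabolic weighted estimate --- one needs uniform curvature bounds along the flow and a maximum principle valid without compactness, which is precisely the content of the \cite{QSW} lemma that you cannot both fold into ``short-time existence'' and then treat the decay as still to be proven. Second, your fallback of tracking the coefficients $h_{(j)}(t)$ order by order in a fixed gauge is problematic: the Graham--Lee normal form is tied to a special boundary defining function that depends on $g(t)$ and moves with $t$ (the paper emphasizes this and only confronts it later, in Lemma \ref{Lemma3.1}, where it is genuinely needed), and the DeTurck gauge relative to a background does not restore the normal form, so the coefficients do not satisfy clean closed ODEs read off from \eqref{eq1.1}. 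The fix is structural rather than technical: prove persistence of the single scalar-type quantity $|x^{-n}E(g(t))|$ by the maximum principle, and let the expansion statement follow at each fixed time from the static Proposition \ref{proposition2.2} (together with obstruction-flatness of $g_0$ when $n$ is odd, which your proposal does not address).
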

\noindent This theorem differs only slightly from the existence result in \cite{QSW} in that for those authors
the approximately Poincar\'e-Einstein condition for the initial metric $g^0$ and the resulting flow $g(t)$ is phrased
purely in terms of the decay of the Einstein tensor $E(g)$ at $\del M$, whereas our result also posits that $g^0$ has a
complete asymptotic expansion and this is preserved under the flow. The APE condition is that some initial
set of terms in this expansion satisfy a particular differential relationship. The fact that this is preserved under
the flow means that these leading terms are constant along the flow. In a sequel to this paper we shall study the evolution under
Ricci flow of asymptotically hyperbolic metrics which are asymptotic to Poincar\'e-Einstein metrics in a much
weaker sense, and where the variation of the individual terms in the expansion under the flow becomes one of central technical
issues. It is highly likely that $T_0 = T$, i.e., that the solution remains APE in the maximal interval of existence, but we do
not address this here.

It is natural to study how various numerical quantities associated to the family of metrics $g(t)$ evolve with $t$. To that end, we prove
\begin{maintheorem}\label{TheoremB} Suppose that $\dim M$ is even and $(M,g(t))$ is as in Theorem \ref{TheoremA}.
Then for $t\in[0,T_0)$, the renormalized volume $\mathrm{RenV}[g(t)]$ satisfies
\begin{equation}
\frac{d\, }{dt}\mathrm{RenV}((M,g(t))) =-\int_M \left ( \Sc(g(t)) + n(n-1) \right ) \, dV_{g(t)}\equiv -\int_M tr^{g(t)} E(g(t))
\,dV_{g(t)}\ , \label{eq1.2}
\end{equation}
and
\begin{equation}
\begin{split}
\frac{d^2\,}{dt^2} \RenV((M,g(t)) &= - \int_M \left( 2\left \vert E \right \vert^2 -2(n-1)\tr^g E-\left ( \tr^gE\right )^2\right) \, dV_{g(t)}\\
&\equiv - \int_M \left [ 2\left \vert Z \right \vert^2 -\frac{(n-2)}{n}\left ( \tr^gE\right )^2-2(n-1)\tr^g E\right ] \, dV_{g(t)}.
\label{2ndderiv}
\end{split}
\end{equation}
Here $\Sc(g(t))$ is the scalar curvature of $g(t)$, $\tr^g E(g(t)) = \Sc(g(t))+n(n-1)$, and $Z(g(t))$ is the trace-free part of the Ricci tensor.
\end{maintheorem}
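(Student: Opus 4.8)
The plan is to compute with the characterization of the renormalized volume as a finite part. Fix a representative of the (flow-invariant) conformal infinity, let $x$ be the associated geodesic boundary defining function, and work in the normal-form collar $g(t)=x^{-2}(dx^2+h_x(t))$; by Theorem \ref{TheoremA} the defining function $x$ and the leading coefficients of $h_x(t)$ may be taken independent of $t$. Set $\Phi(z,t)=\int_M x^z\,dV_{g(t)}$. For $\mathrm{Re}\,z$ large this converges absolutely and is holomorphic in $z$, and since $\dim M$ is even and $x$ is geodesic the expansion of $dV_{g(t)}$ contains only even powers of $x$, so the meromorphic continuation of $\Phi(\cdot,t)$ has poles only at $z=n-1,n-3,\dots,1$ and is holomorphic near $z=0$; hence $\RenV(M,g(t))=\FP_{z=0}\Phi(z,t)=\Phi(0,t)$. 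For $\mathrm{Re}\,z$ large one differentiates under the integral sign, using \eqref{eq1.1} and $\partial_t\,dV_{g(t)}=\tfrac12\bigl(\tr^{g(t)}\partial_t g(t)\bigr)\,dV_{g(t)}=-(\tr^{g(t)}E)\,dV_{g(t)}$, to get $\partial_t\Phi(z,t)=-\int_M x^z\,(\tr^g E)\,dV_{g(t)}=:\Psi(z,t)$. The key point is that the APE hypothesis forces $\tr^g E=\Sc+n(n-1)$ to vanish at $\partial M$ fast enough — the trace-free part of the obstruction controls the decay of $E$, its trace decays faster — that $(\tr^g E)\,dV_g$ is absolutely integrable on $M$; consequently $\Psi(z,t)$ is holomorphic in $z$ on a full neighbourhood of $z=0$.

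To pass $d/dt$ through the finite part, note that for $\mathrm{Re}\,z$ large one has $\Phi(z,t)=\Phi(z,0)+\int_0^t\Psi(z,s)\,ds$, that $\Psi(\cdot,s)$ is the holomorphic continuation of its large-$\mathrm{Re}\,z$ restriction, and that $\int_0^t\Psi(z,s)\,ds$ is holomorphic in $z$ near $0$; by uniqueness of analytic continuation the identity persists on $\mathbb{C}\setminus\{1,3,\dots,n-1\}$, in particular near $z=0$, so $\partial_t\Phi(z,t)=\Psi(z,t)$ there. Evaluating at $z=0$ gives
\[
\frac{d}{dt}\RenV(M,g(t))=\Psi(0,t)=-\int_M(\tr^g E)\,dV_{g(t)}=-\int_M\bigl(\Sc(g(t))+n(n-1)\bigr)\,dV_{g(t)},
\]
which is \eqref{eq1.2}.

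For \eqref{2ndderiv} we differentiate the now genuinely convergent integral $\int_M(\tr^g E)\,dV_g$ directly, differentiation under the integral being justified by the uniform APE decay of the relevant quantities on compact subintervals of $[0,T_0)$. Using $\partial_t dV_g=-(\tr^g E)\,dV_g$, $\partial_t g^{ij}=2E^{ij}$, and $\partial_t E=\partial_t\Rc-2(n-1)E$,
\[
\partial_t\bigl((\tr^g E)\,dV_g\bigr)=\Bigl(2|E|^2+\tr^g(\partial_t\Rc)-2(n-1)\tr^g E-(\tr^g E)^2\Bigr)\,dV_g .
\]
The first variation of scalar curvature applied to \eqref{eq1.1}, together with the contracted second Bianchi identity, gives $\tr^g(\partial_t\Rc)=\Delta_g\Sc$. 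Since $\Sc=-n(n-1)+\tr^g E$ with $\tr^g E$ rapidly decaying, $\Delta_g\Sc=\Delta_g(\tr^g E)$ is integrable and $\int_M\Delta_g\Sc\,dV_g=0$: the divergence theorem on $\{x>\epsilon\}$ produces a boundary integral over $\{x=\epsilon\}$ whose integrand decays in $\epsilon$ faster than $\mathrm{Area}_g(\{x=\epsilon\})=O(\epsilon^{-(n-1)})$ grows, hence $\to0$. This yields
\[
\frac{d^2}{dt^2}\RenV(M,g(t))=-\int_M\Bigl(2|E|^2-2(n-1)\tr^g E-(\tr^g E)^2\Bigr)\,dV_{g(t)},
\]
the first line of \eqref{2ndderiv}; the second line is pure algebra, since $E=Z+\tfrac1n(\tr^g E)\,g$ with $Z$ the trace-free Ricci tensor gives $|E|^2=|Z|^2+\tfrac1n(\tr^g E)^2$ and hence $2|E|^2-(\tr^g E)^2=2|Z|^2-\tfrac{n-2}{n}(\tr^g E)^2$.

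\textbf{Main obstacle.} The substantive point is the one flagged in the first two paragraphs: that the renormalization is inert under $d/dt$, i.e.\ that the divergent part of $\mathrm{Vol}_{g(t)}(\{x>\epsilon\})$ does not move with $t$ and the constant term varies differentiably. This is precisely where Theorem \ref{TheoremA} is used — preservation of the APE structure and constancy of its leading coefficients fix the pole locations of $\Phi(\cdot,t)$ and keep $\Phi$ holomorphic near $z=0$, while the associated decay of $\tr^g E$ (and of $|E|^2$, $\Delta_g\Sc$) makes the finite part collapse to an ordinary integral. Everything else — the volume-form and scalar-curvature variations, the identity $\tr^g(\partial_t\Rc)=\Delta_g\Sc$, the vanishing of the boundary term, and the trace decomposition — is routine.
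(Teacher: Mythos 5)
The second half of your argument (the second variation) is essentially the paper's: your identity $\tr^g(\partial_t\Rc)=\Delta_g \Sc$ reproduces the evolution equation $(\partial_t-\Delta)\tr^{g}E=2|E|^2-2(n-1)\tr^{g}E$ that the paper uses, and your disposal of the boundary term and the trace decomposition match the text. The genuine gap is in the first variation, at precisely the step you label the ``main obstacle.'' You assert that ``by Theorem \ref{TheoremA} the defining function $x$ \dots may be taken independent of $t$,'' so that $g(t)=x^{-2}(dx^2+h_x(t))$ in normal form for a single fixed geodesic defining function. Theorem \ref{TheoremA} says no such thing: it gives the expansion of $g(t)$ with respect to the special boundary defining function $x_t$ determined by $g_0$ \emph{and} $g(t)$, and $x_t$ genuinely moves with $t$ (under the flow, $\partial_t\bigl(g_t^{-1}(dx,dx)\bigr)=2E_t(dx,dx)$ is not zero in general, so a fixed $x$ fails to be geodesic for $g(t)$ once $t>0$); the paper explicitly flags the $t$-dependence of the normal form as the chief difficulty of the whole computation. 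As a result, your identification $\RenV(M,g(t))=\FP\int_M x^z\,dV_{g(t)}$ with a $t$-independent weight, and the claim that this zeta function is holomorphic at $z=0$ with poles only at the ``even'' locations, are unproven for $t>0$: both require knowing $x_t=e^{\omega_t}x$ with $\omega_t=\calO(x^n)$, since otherwise the two finite parts differ by $\Res\int_M x^{z}\omega_t\,dV_{g_t}$, which picks out the order-$(n-1)$ coefficient of $\omega_t$ against the $x^{-n}$ volume density and need not vanish.

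This is exactly the ingredient the paper supplies and you omit. Its Lemma \ref{Lemma3.1} differentiates the eikonal relation $1=|dx_t|^2_{x_t^2g_t}$ in $t$ and feeds in the APE decay $|E_t|_{g_t}=\calO(x^n)$ to conclude inductively that $\dot\omega_t=\dot\omega_{(n)}(y,t)\,x^n+\calO(x^{n+1})$; then, when one differentiates $\FP\int_M x_t^z\,dV_{g_t}$ in $t$, the extra term $\FP\int_M z\,x_t^{z-1}\dot x_t\,dV_{g_t}$ is a residue which vanishes because $\dot\omega_t$ cancels the singularity of the volume measure, and only your ``easy'' term $-\int_M(\Sc+n(n-1))\,dV_g$ survives. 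Your analytic-continuation device for commuting $d/dt$ with the finite part is fine once the weight is honestly $t$-independent, but without a proof of the $\calO(x^n)$ control on $\omega_t$ (equivalently, of the vanishing of the $\dot x_t$ contribution) the first-variation formula \eqref{eq1.2} is not established; attributing this to Theorem \ref{TheoremA} assumes away the crux of the proof.
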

Note that if $\Sc(g(t)) + n(n-1) = 0$ at any value $t=t_0$, so that $\RenV$ is stationary under the Ricci flow there,
then
\begin{equation}
\label{eq1.4} \frac{d^2\,}{dt^2}\bigg \vert_{t=t_0} \RenV(M,g(t)) = - 2\int_M \left \vert Z(g(t_0)) \right \vert^2 dV_{g(t_0)}\ .
\end{equation}

The expression \eqref{eq1.2} is closely related to a well-known formula of Anderson \cite[eqn. (0.9)]{Anderson}
for the variation of renormalized volume for a one-parameter family $g(t)$ of PE metrics (where, in particular, the
conformal infinity varies). Strictly speaking, \cite{Anderson} only contains details for the four dimensional case,
although the proof extends to all even dimensions; we refer to \cite{Albin} for a perhaps more transparent
proof based on different computations which works directly in all even dimensions. This formula asserts that
\begin{equation}
\frac{d\, }{dt}\mathrm{RenV}((M,g(t))) = -\frac14 \int_{\del M} \langle \kappa_0, g_{n-1} \rangle_{g_0}
\,dA_{g(t)}\ ; \label{eqand}
\end{equation}
here $\kappa = g'(0)$ is the `Jacobi field' associated to the variation, and $\kappa_0$ is the leading
term in its expansion as $x \searrow 0$. The fact that this formula localizes at the boundary
is due to the fact that the variation is only amongst PE metrics. One very interesting point is that the term $g_{n-1}$
is globally determined by the metric $g(0)$; it is the analogue of the Neumann data of the metric in this
setting. There is, of course, a more general formula which includes both interior and boundary terms,
which calculates the variation for families of metrics which are not PE and for which the conformal infinity changes.

We also note the following closely related setting. There is a theory developed by Graham and Witten \cite{GW} for
renormalized areas and volumes for complete, properly embedded minimal submanifolds in PE spaces, of arbitrary
dimension and codimension, which have regular asymptotic boundaries. The special case of renormalized area for such
two-dimensional minimal surfaces $Y \subset \mathbb H^3$, studied at length in \cite{AM}, bears a striking resemblance
to the special case of the considerations here concerning renormalized volume of four-dimensional PE spaces. In particular,
there is a local formula for renormalized area in that setting which mirrors the formula \eqref{eq4.1} for renormalized
volume in four dimensions below.  The formul\ae\ in \cite{AM} for the first and second variations of renormalized area
are once again entirely localized to the asymptotic boundary at infinity because the variation is amongst minimal
surfaces in a hyperbolic manifold; there would be interior terms if either the surfaces are not minimal or if the
ambient metric is not exactly Einstein. One other point is that in the present setting, when $n$ is even,
the APE condition precludes the term at order $n-1$ in the expansion of the volume form (this is explained in the next
section), which would be the one appearing in the boundary term.

In one forthcoming paper, we shall treat a more delicate situation concerning the Ricci evolution of a metric with a
general even expansion (but which is not necessarily APE) up to order $n$. In yet another paper, the first two authors
study the variation of renormalized volume for a hypersurface evolving by mean curvature flow.

Continuing on, applying the maximum principle to $\Sc(g)+n(n-1)$ gives our
\begin{maintheorem}\label{TheoremC}
If $(M,g(t))$ is as in Theorem \ref{TheoremB} and $\Sc(g^0) + n(n-1) \ge 0$, then  $\Sc(g(t)) + n(n-1) \ge 0$ for all $t$
and $\mathrm{RenV}(g(t))$ is monotone decreasing. Furthermore, $\RenV(g(t))$ is constant on an interval $t\in I$ iff
$\Sc(g(t))+n(n-1)=0$ for all $0 \leq t < T$.
\end{maintheorem}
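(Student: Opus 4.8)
The plan is to combine the first-variation formula \eqref{eq1.2} from Theorem \ref{TheoremB} with a parabolic maximum principle for the scalar curvature on the conformally compact manifold $(M,g(t))$. First I would recall the evolution equation for scalar curvature under the normalized Ricci flow \eqref{eq1.1}: writing $u = \Sc(g) + n(n-1)$, one has
\begin{equation*}
\del_t u = \Delta_{g(t)} u + 2|\Rc(g(t))|^2 + 2(n-1)\Sc(g(t)),
\end{equation*}
which, after substituting $\Sc = u - n(n-1)$ and using the inequality $|\Rc|^2 \ge \tfrac1n \Sc^2$, can be manipulated into the form $\del_t u \ge \Delta_{g(t)} u + c(t) u$ for a locally bounded function $c(t)$ (plus a nonnegative zeroth-order remainder). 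The key point is that, by Theorem \ref{TheoremA}, the metrics $g(t)$ remain APE on $[0,T_0)$, and the APE expansion forces $u = \Sc(g(t)) + n(n-1) \to 0$ at $\del M$ uniformly on finite time intervals, in fact with a definite rate of decay coming from the structure of the asymptotic expansion. Thus $u$ extends continuously to $\overline M \times [0,T_0)$ with $u = 0$ on the boundary at infinity.

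Next I would apply the maximum principle to $u$ on $\overline M \times [0,t_1]$ for any $t_1 < T_0$. Since $u \ge 0$ on the parabolic boundary (at $t=0$ by hypothesis, and on $\del M$ for all time because $u$ vanishes there), and since $u$ satisfies a differential inequality of the form $\del_t u \ge \Delta u + c\, u$ with the zeroth-order coefficient controlled on $\overline M \times [0,t_1]$, the weak maximum principle on the complete manifold --- justified using the boundedness of geometry in the APE class together with the decay of $u$ at infinity, which supplies the needed barriers --- yields $u \ge 0$ throughout. Hence $\Sc(g(t)) + n(n-1) \ge 0$ for all $t \in [0,T_0)$. Feeding this into \eqref{eq1.2} gives $\tfrac{d}{dt}\RenV(M,g(t)) = -\int_M u\, dV_{g(t)} \le 0$, so $\RenV$ is monotone nonincreasing.

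For the rigidity statement, suppose $\RenV(g(t))$ is constant on an interval $I \subset [0,T_0)$. Then $\int_M u\, dV_{g(t)} = 0$ for $t \in I$; since $u \ge 0$ and is continuous, this forces $u \equiv 0$, i.e. $\Sc(g(t)) + n(n-1) = 0$ on $M$ for all $t \in I$, and the converse is immediate from \eqref{eq1.2}. (One should note that the theorem as stated asserts this for $0 \le t < T$; by the remark following Theorem \ref{TheoremA} that $T_0$ is expected to equal $T$, or more precisely by observing that the argument runs on all of $[0,T_0)$, the statement holds on the APE interval, and if $u \equiv 0$ persists one may continue.)

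I expect the main obstacle to be the rigorous justification of the maximum principle on the noncompact manifold $M$: one must rule out a loss of the minimum ``through infinity.'' This is exactly where the APE hypothesis, preserved by Theorem \ref{TheoremA}, does the essential work --- the controlled asymptotic expansion gives both the vanishing of $u$ at $\del M$ and enough uniformity (bounded geometry, controlled decay rates) to construct the comparison functions needed for the maximum principle, so the argument reduces to a standard application once this asymptotic control is in hand. A secondary technical point is verifying that the zeroth-order coefficient $c(t)$ arising after the substitution $\Sc = u - n(n-1)$ is indeed bounded on compact time intervals, which again follows from the bounded-geometry estimates available in the APE setting.
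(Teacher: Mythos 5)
Your proposal is correct and follows essentially the same route as the paper: the evolution equation for $\tr^{g}E = \Sc + n(n-1)$, a maximum/minimum principle on the noncompact manifold justified by the decay of this quantity at $\del M$ (the paper uses an exhaustion by $\{x\ge 1/k\}$ together with the bound $|\tr^g E|\le Cx$, which only needs the AH-level decay), and then substitution into the first-variation formula \eqref{eq1.2} for monotonicity and the rigidity statement. The only cosmetic difference is that the paper obtains the differential inequality simply by discarding the nonnegative term $2|E|^2$, so the pinching $|\Rc|^2\ge \Sc^2/n$ you invoke is not needed.
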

As one application, we record a {\it no breathers} theorem:
\begin{maincorollary}\label{CorollaryD}
Let $(M,g(t))$ be as in Theorem \ref{TheoremB} and suppose that $\Sc(g^0) + n(n-1)\ge 0$.
If there exist times $0 \leq t_1 <t_2$ such that $(M,g(t_1))$ is isometric to $(M,g(t_2))$,
then $(M,g(t))$ is a stationary solution and Einstein.
If, in addition, $n<8$ and the conformal class induced on $\partial_\infty M$ by $g^0$ is that of the
standard round sphere, then $(M,g(t))$ is isometric to hyperbolic space $\mathbb H^n$ for all $t$.
\end{maincorollary}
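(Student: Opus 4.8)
The plan is to combine the monotonicity and rigidity of Theorem~\ref{TheoremC} with the fact that $\RenV$ is an isometry invariant. Since $\RenV(M,g)$ is extracted from $g$ together with its geodesic compactification (and, in the even-dimensional situation at hand, depends on no further choices), an isometry $\phi\colon(M,g(t_1))\to(M,g(t_2))$ gives $g(t_1)=\phi^{*}g(t_2)$ and hence $\RenV(g(t_1))=\RenV(g(t_2))$. By Theorem~\ref{TheoremC}, the hypothesis $\Sc(g^0)+n(n-1)\ge 0$ makes $t\mapsto\RenV(g(t))$ monotone non-increasing on $[0,T_0)$, so equality of its values at $t_1<t_2$ forces it to be constant on $[t_1,t_2]$.

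Next I would feed this back into the rigidity clause of Theorem~\ref{TheoremC}: constancy of $\RenV$ on a subinterval forces $\Sc(g(t))+n(n-1)\equiv 0$ on $M$ for all $t\in[0,T_0)$, equivalently $\tr^{g(t)}E(g(t))\equiv 0$. By \eqref{eq1.2} this gives $\tfrac{d}{dt}\RenV\equiv 0$ on $[0,T_0)$, hence $\tfrac{d^{2}}{dt^{2}}\RenV\equiv 0$ as well. The hypothesis $\Sc+n(n-1)=0$ of \eqref{eq1.4} now holds at every time, so \eqref{eq1.4} yields $0=-2\int_M|Z(g(t))|^{2}\,dV_{g(t)}$, and therefore $Z(g(t))\equiv 0$: each $g(t)$ is Einstein. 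Combined with $\Sc(g(t))=-n(n-1)$ this gives $\Rc(g(t))=-(n-1)g(t)$, i.e. $E(g(t))\equiv 0$, so $\del_t g=-2E(g)\equiv 0$ and the solution is stationary, $g(t)\equiv g^0$ with $g^0$ Einstein. This proves the first assertion.

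For the second assertion, $g^0$ is now a Poincar\'e--Einstein metric (Einstein, and asymptotically hyperbolic since APE) whose conformal infinity is the round conformal class on $S^{n-1}$. One then invokes the rigidity theorem for conformally compact Einstein manifolds with round-sphere conformal infinity --- known to hold for $n<8$ --- which asserts that such a manifold must be hyperbolic space; hence $(M,g^0)$ is isometric to $\mathbb H^n$, and since the flow is stationary, $(M,g(t))$ is isometric to $\mathbb H^n$ for every $t$.

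The step that demands the most care is passing from constancy of $\RenV$ on one subinterval to $\Sc+n(n-1)\equiv 0$ for all $t$. Constancy gives $\int_M(\Sc+n(n-1))\,dV=0$, and since $\Sc+n(n-1)\ge 0$ is preserved this yields pointwise vanishing at the times in the subinterval; propagating it to all $t$ requires forward uniqueness for the parabolic equation
\[
\del_t u=\Delta u+2|Z|^{2}+\tfrac{2}{n}\,u^{2}-2(n-1)\,u,\qquad u:=\Sc+n(n-1),
\]
(obtained from the standard evolution of scalar curvature under \eqref{eq1.1}) with zero initial data, together with a backward-uniqueness argument for the flow, legitimate because APE metrics have bounded geometry. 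This is precisely what the rigidity clause of Theorem~\ref{TheoremC} supplies; alternatively, once $u\equiv 0$ on the subinterval is known, the displayed equation gives $|Z|^{2}\equiv 0$ there directly --- bypassing \eqref{eq1.4} --- and uniqueness of the flow then extends the stationary Einstein metric to the whole interval of existence.
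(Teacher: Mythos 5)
Your proof is correct and follows essentially the same route as the paper: isometry invariance of $\RenV$, the rigidity clause of Theorem~\ref{TheoremC} to obtain $\Sc(g(t))+n(n-1)\equiv 0$, an upgrade from this scalar identity to $E(g(t))\equiv 0$ (hence a stationary Einstein solution), and the $n<8$ round-sphere rigidity theorem (Qing's theorem, cited in the paper) for the last claim. The only, harmless, difference is that you extract $Z\equiv 0$ from the integrated second-variation identity \eqref{eq1.4}, whereas the paper reads $E\equiv 0$ off pointwise from the evolution equation \eqref{eq3.17} --- precisely the alternative you yourself note at the end.
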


The organization of this paper is as follows. In \S 2 we describe the various asymptotic conditions for metrics,
including conformally compactifiable, asymptotically hyperbolic, and APE, and prove Theorem \ref{TheoremA}.
Next, in \S 3, we recall the alternative Riesz regularization method to define $\mathrm{RenV}$ and
prove Theorem \ref{TheoremB}. The proofs of Theorem \ref{TheoremC} and Corollary \ref{CorollaryD} 
appear here too. Finally, \S 4 describes how the APE evolution of renormalized volume appears in the thermodynamics of
$4$-dimensional black holes, where renormalized volume equals the Hawking-Page difference of (formally divergent) actions on-shell.
Our monotonicity theorem in the APE setting echoes numerical results of Headrick and Wiseman \cite{HP} obtained in the
different setting of ``black holes in a finite cavity''.  The APE setting with its strict monotonicity formula and absence of a finite
boundary makes the Headrick-Wiseman monotonicity argument rigorous. An interesting question is whether the Headrick-Wiseman
free energy diagram interpretation applies to the off-shell renormalized volume (see \S 4).

We thank Gerhard Huisken for first posing the question which motivated this work. RM acknowledges support by National Science Foundation grant DMS--1105050. EB was supported in part by a new faculty
startup grant at Seattle University. EW was supported by a Discovery Grant from the Natural Sciences and Engineering Research
Council of Canada. EB thanks Robin Graham for useful conversations. EW is grateful to Don Page for a discussion of the
Hawking-Page transition. All three authors also offer thanks to the Park City Math Institute 2013 Summer Program, where
this work was completed. We thank Toby Wiseman for commenting on a draft of this paper.

\section{The evolution of expansions}
\setcounter{equation}{0}

\noindent The renormalization scheme defining $\mathrm{RenV}$ requires that the metric $g$ admits
a particular type of asymptotic expansion near $\del M$. We begin by recalling some facts about
such expansions and then show that they are preserved under the normalized Ricci flow.

To fix notation, let $M^n$ be an open manifold which is the interior of a compact manifold with boundary $\Mbar$.
A metric $g$ on $M$ is called conformally compact if there exists a smooth defining function $x$
for $\del M$ such that $\overline{g} = x^2 g$ extends to a smooth metric on $\Mbar$.
The restriction of $\olg$ to $T\del M$ determines a conformal class $\frakc(g)$ on the boundary,
called the conformal infinity of $g$. An asymptotically hyperbolic (AH) metric is one which is conformally
compact and satisfies $|dx|^2_{\olg} = 1$ on $\del M$. Observe that neither $x$
nor $\olg$ are individually well-defined since $g = x^{-2} \olg = (a x)^{-2} (a^2 \olg)$ for any positive
smooth function $a$, but the AH condition is independent of this ambiguity.  It is also useful to
consider conformally compact metrics with various other regularity conditions, for example
metrics for which $\olg$ and $x$ are both polyhomogeneous rather than smooth.

An AH metric $g$ is Poincar\'e-Einstein (PE) if it is also Einstein, i.e.\ if  the Einstein tensor $E(g) := \Rc(g)
+ (n-1)g$ vanishes identically. Any such metric is a stationary solution of the normalized Ricci flow.
If $g^0$ is AH, then it is proved in \cite{Bahuaud} that the solution $g(t)$ of the normalized Ricci
flow with this initial data remains AH, and in particular, smoothly conformally compact, for a short time.
In the optimal situation, this solution $g(t)$ should converge to a PE metric, but of course this may
fail to happen because of the development of singularities. It is still unclear whether there is any
possibility of new and specifically boundary singularities forming, or whether singularity formation
is confined entirely to the interior, where it can be studied by the same techniques as in the
compact case.

If $g$ is AH, then a theorem of Graham and Lee \cite{GL} states that for each representative $g_0$ of $\frakc(g)$,
there is a uniquely defined special boundary defining function $x$ such that
\begin{equation} \label{eq-ah-decomp}
g = \frac{dx^2 + \hat{g}_x}{x^2}
\end{equation}
near $\del M$. Here $\hat{g}_x$ is a smooth family of symmetric $2$-tensors on $\del M$, which can be expanded
in a Taylor series as
\begin{equation}
\hat{g}_x = g_0 + x g_1 + \ldots + x^n g_n + \ldots
\label{expgenmet}
\end{equation}
This choice of special boundary defining function is a coordinate gauge which is very useful for
many of the algebraic computations concerning these metrics. For example, it is immediate from \eqref{eq-ah-decomp}
and \eqref{expgenmet} that if $g$ is AH, then it is asymptotically Einstein to order $1$ in the sense that
$|E(g)|_g = \calO(x)$. On the other hand, if $g$ is actually PE, then it was shown by Fefferman and
Graham \cite{FG} that if $g_0$ and (perhaps surprisingly) also $g_{n-1}$ are fixed, then all the remaining
coefficients $g_j$ are formally determined by the Einstein condition $E(g) = 0$. In particular,
the leading coefficients up to order $n-2$ are determined formally by $g_0$ alone. To be more precise, there is a
difference between even and odd dimensions, since in the latter case the terms at order $n-1$ and higher
typically include logarithmic factors. The general form of a PE metric is:
\begin{eqnarray}
g=\begin{cases}x^{-2}\big ( dx^2+g_{0}+x^2g_{2} +\dots +x^{n-2}g_{n-2}& \mbox{ } \\
\qquad +x^{n-1}g_{n-1}+ \calO(x^n)\big )& \mbox{for $n$ even, } \\
\\
x^{-2}\big ( dx^2+g_{0}+x^2g_{2}+\dots
+x^{n-3}g_{n-3}& \mbox{ } \\ \qquad +x^{n-1}\left ( g_{n-1}+ \tilde{g}_{n-1}\log x\right )
+\calO(x^{n}\log x)\big ) & \mbox{for $n$ odd, }
\end{cases}
\label{expg}
\end{eqnarray}
where each $g_{j}$, $0 \leq j \leq n-2$ is obtained by applying a universal differential expression to $g_0$.
When $n$ is even, $\tr^{g_0} g_{n-1} = 0$. When $n$ is odd, the so-called ambient-obstruction tensor $\tilde{g}_{n-1}$
is also determined by $g_0$ and satisfies $\tr^{g_0} \tilde{g}_{n-1} = 0$; and $\tr^{g_0}g_{n-1}$ is formally determined from
the prior coefficients.

This leads to the
\begin{definition}\label{definition2.1}  An AH metric $g$ is called asymptotically Poincar\'e-Einstein (APE) if $|E(g)|_g = \calO(x^n)$.
\end{definition}

In light of the remarks above, in order to work with smooth conformal compactifications when $n$ is odd, we
require that the conformal infinity be obstruction flat, i.e. $\tilde{g}_{n-1} = 0$.  We refer to a metric satisfying this condition
as an unobstructed APE.
\begin{prop} \label{prop:APE-expansion}\label{proposition2.2}
If $g$ is APE, then the expansion of $\hat{g}_x$ in the Graham-Lee normal form for $g$, relative to any choice of
representative $g_0 \in \frakc(g)$, has the form \eqref{expg}.
\end{prop}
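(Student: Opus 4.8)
The plan is to run the Fefferman--Graham recursion for the vanishing of the Einstein tensor, but to track it order by order under the weaker hypothesis $|E(g)|_g = \calO(x^n)$ rather than $E(g) \equiv 0$, and to check that the obstruction to solving at each order is controlled by exactly the same universal operators that appear in the PE case. First I would write $g$ in the Graham--Lee normal form \eqref{eq-ah-decomp} relative to a fixed representative $g_0$, so that $\hat g_x = g_0 + x g_1 + x^2 g_2 + \cdots$ with the $g_j$ \emph{a priori} arbitrary symmetric $2$-tensors on $\del M$; this is legitimate because the Graham--Lee theorem only requires $g$ to be AH. The goal is then to show $g_1 = g_3 = \cdots = g_{n-1} = 0$ (odd indices below $n$; in the odd-dimensional case the obstruction term $\tilde g_{n-1}$ is assumed to vanish by the unobstructed hypothesis) and that the even-index coefficients $g_2, \ldots, g_{n-2}$ agree with the universal expressions built from $g_0$.

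The key computational input is the standard expression for the components of $E(g)$ in the normal-form gauge as a differential operator in $x$ acting on the family $\hat g_x$. Writing $h = \hat g_x$ and $h' = \del_x h$, the Einstein equation has the schematic form (see Fefferman--Graham / Graham's lectures)
\[
x \, h'' - (n-1) h' - (\tr_{h} h')\, h' + x\, h' h^{-1} h' \;+\; x\,(\text{curvature terms in } h) \;=\; -2\, x\, E(g)_{\mathrm{tang}},
\]
together with the scalar and mixed components which give the trace and divergence relations. The point is that each coefficient $g_k$ enters the order-$x^{k-1}$ term of the tangential equation through the operator $\big(k(k-1) - k(n-1)\big) g_k = k(k-n)\, g_k$, plus lower-order data already determined. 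Thus for $1 \le k \le n-1$ the factor $k(k-n)$ is a nonzero scalar, so the order-$x^{k-1}$ part of the equation $E(g) = \calO(x^n)$ forces $g_k$ to equal precisely the universal expression in $g_0, \ldots, g_{k-1}$ dictated by the recursion — and since $g_1$ is forced to be a universal (indeed vanishing, because the leading data contributes nothing at that order) expression, an induction shows all odd-index $g_k$ with $k < n$ vanish while the even-index ones take their FG values. I would handle the trace and transverse-traceless parts separately exactly as in the PE derivation: the trace of $g_{n-1}$ being zero (when $n$ is even) drops out of the recursion because the relevant indicial factor degenerates, which is why $g_{n-1}$ is not determined and only the tracelessness survives; for $n$ odd one invokes the unobstructed assumption to kill the log term.

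The main obstacle — and the only place the argument differs substantively from the classical PE computation — is bookkeeping the error term: I must confirm that the right-hand side $E(g)_{\mathrm{tang}}$, which is $\calO(x^n)$ by hypothesis, really does not contaminate the coefficients $g_k$ for $k \le n-1$. Concretely, one has to check that in the expansion of the full nonlinear operator applied to $h$, no term of order $x^{k-1}$ for $k \le n-1$ can arise from the product of $E(g) = \calO(x^n)$ with negative powers of $x$ hidden in $h^{-1}$ or in the curvature of $h$; since $h = g_0 + O(x)$ is nondegenerate and smooth down to $x=0$, such negative powers do not occur, and one checks that multiplication by $x$ in front of the curvature and quadratic terms means the lowest order at which $E$ can feed back is $x^n$ itself. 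Once this is verified the induction closes cleanly and the expansion has the stated form \eqref{expg}. A secondary, more routine point is to note that the $g_j$ for $j \le n-2$ are automatically smooth (no logarithms) because the indicial roots $k(k-n)$ are nonzero integers in that range; logarithms can only enter at order $n-1$ and beyond, which is where the even/odd dichotomy and the obstruction-flatness hypothesis enter.
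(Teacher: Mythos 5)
Your strategy is the same as the paper's: write $g$ in the Graham--Lee gauge, run the Fefferman--Graham recursion for the tangential components of $E(g)$ under the weakened hypothesis $|E(g)|_g=\calO(x^n)$, and use parity to get the form \eqref{expg}. However, two steps, as written, are incorrect. The first is the decay bookkeeping, which you yourself single out as the crux. The hypothesis controls the $g$-norm of $E$, and since $g=x^{-2}\olg$, passing to coordinate components in the compactified frame costs two powers of $x$: one only gets $E_{\alpha\beta}=\calO(x^{n-2})$, so the source term $2xE_{\alpha\beta}$ in the tangential equation is $\calO(x^{n-1})$, not the $\calO(x^{n+1})$ that your ``$E_{\mathrm{tang}}=\calO(x^n)$ by hypothesis'' would give. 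This happens to be exactly enough --- differentiating $s-1\le n-2$ times and setting $x=0$ kills it, so the coefficients through order $n-2$ and the trace statement at order $n-1$ are unaffected --- but your verification must be redone with the correct exponent; the correct exponent is also what shows the argument stops precisely at order $n-1$ (e.g.\ the divergence-free condition on $g_{n-1}$, valid for PE metrics, cannot be deduced for APE).

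The second problem is the indicial analysis, which is internally inconsistent and, in this paper's convention ($\dim M=n$, boundary of dimension $n-1$), off by one. Substituting $x^k g_k$ into the linear part of the tangential equation (the paper's \eqref{eq2.5}, with coefficient $(n-2)$ on $g'$) gives the factor $k(n-1-k)$ on the trace-free part: nonzero for $1\le k\le n-2$ and degenerate at $k=n-1$. Your factor $k(k-n)$ degenerates at $k=n$ and is nonzero at $k=n-1$, so your recursion as stated would determine $g_{n-1}$ outright; indeed your announced goal ``$g_1=g_3=\cdots=g_{n-1}=0$'' would force $g_{n-1}$ to vanish, which contradicts the very form \eqref{expg} you are proving (for a generic PE, hence APE, metric the trace-free part of $g_{n-1}$ is nonzero free data) and also contradicts your own later, correct, remark that the factor degenerates there and only tracelessness survives. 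The repair is to place the critical order correctly: $g_k$ is determined by $g_0$ (and vanishes for odd $k$, by parity) only for $k\le n-2$; at $k=n-1$ the trace-free part drops out of the equation and only the trace component gives information, yielding $\tr^{g_0}g_{n-1}=0$ when $n$ is even and determining $\tr^{g_0}g_{n-1}$ when $n$ is odd, where the unobstructed hypothesis is what removes the log term. With those two corrections your argument coincides with the proof sketched in the paper.
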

%
\begin{remark}\label{remark2.3}
The key point here is that the leading coefficients in the metric expansion are all determined by $g_0$ and are the same as the corresponding coefficients of the expansion of a PE metric $g'$ for which
$g_0 \in \frakc(g')$. Note, however, that there is no guarantee that a PE metric with the conformal
infinity $[g_0]$ necessarily exists!
\end{remark}
\begin{proof}[Sketch]
Since this result is only a small modification of a well known one when $g$ is PE, we merely sketch the proof,
following the argument in \cite{GrahamHirachi}.

First observe that in any smooth coordinate system $(x, y_1, \ldots, y_{n-1})$, the APE condition implies that
the components of the Einstein tensor in this coordinate system satisfy
\begin{equation}
\label{eq2.4} E(g)_{ij} = \calO(x^{n-2}).
\end{equation}

Now calculate $E(g)= \Rc(g) + (n-1) g$ using \eqref{eq-ah-decomp} and \eqref{expg}.  We use Greek indices to label
components on the boundary and $0$ for $\partial_x$; a `prime' denotes the
derivative with respect to $x$, and $\nabla$ is the Levi-Civita connection for $g_x$ for fixed $x$.
To avoid a proliferation of notation, we will suppress use of the hat notation from equation \eqref{eq-ah-decomp}.
From \cite{GrahamHirachi} we obtain
\begin{align}
\nonumber 2x E_{\alpha\beta} &= -x g_{\alpha\beta}'' + x g^{\mu\nu} g_{\alpha \mu}' g_{\beta\nu}'
- \frac{x}{2} g^{\mu\nu} g_{\mu\nu}'g_{\alpha\beta}' + (n-2) g_{\alpha\beta}'
+ g^{\mu\nu}g_{\mu\nu}' g_{\alpha\beta} + 2x \Rc(g_x)_{\alpha\beta}. \\
\label{eq2.5} E_{\alpha 0} &=E_{0\alpha}= \frac{1}{2} g^{\mu\nu}\left( \nabla_{\nu} g_{\alpha\nu}'
- \nabla_{\alpha} g_{\mu\nu}' \right). \\
\nonumber E_{00} &= -\frac{1}{2} g^{\mu\nu} g_{\mu\nu}''
+ \frac{1}{4} g^{\mu\nu} g^{\lambda\xi} g_{\mu\lambda}' g_{\nu\xi}'
+ \frac{1}{2} x^{-1} g^{\mu\nu} g_{\mu\nu}'.
\end{align}
Differentiating the first equation $s-1$ times with respect to $x$, $s \leq n-2$, and then setting $x=0$ gives
\begin{eqnarray}
\label{eq2.6}
&&(n-1-s) \partial_x^s g_{\alpha\beta} + g^{\mu\nu} \partial_x^s g_{\mu\nu} \cdot g_{\alpha\beta}\\
\nonumber
&=& \partial_x^{s-1} (2x E_{\alpha\beta})_{x=0} + (\mbox{terms containing} \; \partial_x^k g_{\alpha\beta}\; \mbox{with} \; k < s).
\end{eqnarray}
We now formally solve for $g_x$ given the equation $|E| = O(x^n)$.  By keeping careful track of the parity with respect to
$x$ in equation \eqref{eq2.6}, we may solve for the leading part of the expansion of $g_x$ in terms of $g_0$.  Careful analysis
requires separate arguments depending on the parity of $n$.  When $n$ is even, we obtain an even expansion determined by
$g_0$ to order $n-2$.  When $s=n-1$, we find that $\partial_x^{n-2} (2x E_{\alpha\beta}) = \calO(x)$.  Together with a parity argument,
we see that the trace-free part of $g_{n-1}$ must vanish too.  Observe that all of these computations are insensitive to whether
$g$ is PE or simply APE.

When $n$ is odd, we obtain an even expansion determined by $g_0$ up to order $n-3$.  Parity considerations on the right hand
side of \eqref{eq2.6} force the inclusion of a logarithmic term at order $n-1$.  One finds that $\tilde{g}_{n-1}$
is trace-free and determined by $g_0$, whereas the trace of $g_{n-1}$ is determined, but the trace-free part is undetermined.

\end{proof}

As noted earlier, it is proved in \cite{Bahuaud} that if $g^0$ is a smooth AH metric,
then the solution $g(t)$ of normalized Ricci flow with initial condition $g^0$
remains AH, and moreover, $\frakc(g(t)) = \frakc(g^0)$, for all $t$ in the maximal
interval of existence of the solution.   Even though the conformal infinity remains
constant, so we can choose the same representative $g_0$ for all $t$, the
special boundary defining function $x$ depends on the interior metric $g(t)$
as well as $g_0$, and hence depends on $t$.  This means that the Graham-Lee
normal form for $g(t)$ evolves in a rather complicated way, and cannot be
expressed simply in terms of a single boundary defining function.  This is the
chief difficulty in understanding the variation of the renormalized volumes
$\RenV(g(t))$.

As a first step toward that, we recall a result from \cite{QSW} which asserts that
pointwise decay at $\del M$ of the Einstein tensor persists under the flow.
\begin{theorem}[Lemma 4.3 of \cite{QSW}] \label{theorem2.4}
Let $g(t)$ be a solution to the normalized
Ricci flow for $0 \leq t < T$, with $g(0) = g^0$ an AH metric satisfying
\begin{equation}
\label{eq2.7} \left \vert \Rm(g^0) \right \vert \leq k_0, \quad \mbox{and}\qquad \left \vert \nabla \Rm(g^0)\right \vert \leq k_1
\end{equation}
for two constants $k_0, k_1 > 0$. Suppose too that
\begin{equation}
\label{eq2.8} \left \vert E(g^0)\right \vert_{g^0} \leq C_0 x^{\gamma}, \quad \mbox{and} \qquad \left \vert \nabla E(g^0)\right \vert_{g^0} \leq C_0 x^{\gamma}
\end{equation}
for some $\gamma > 0$. Then there exists $C = C(k_0,k_1, n, C_0, T) > 0$ such that
\begin{equation}
\label{eq2.9} \left \vert E(g(t))\right \vert_{g(t)} \leq C x^{\gamma}, \quad \left \vert \nabla E(g(t))\right \vert_{g(t)} \leq C x^{\gamma}, \quad \left \vert \nabla^2 E(g(t))\right \vert_{g(t)}
\leq \frac{C}{\sqrt{t}} x^{\gamma}.
\end{equation}
\end{theorem}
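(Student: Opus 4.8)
The plan is to treat Theorem \ref{theorem2.4} as a parabolic \emph{a priori} estimate for the Einstein tensor along the normalized flow, in four steps: derive the evolution equation for $E := E(g(t))$; propagate curvature bounds for $g(t)$; convert that equation into scalar differential inequalities for $|E|^2$, $|\nabla E|^2$, and a time-weighted combination containing $|\nabla^2 E|^2$; and dominate each of these by a barrier of the form $A e^{\lambda t} x^{2\gamma}$ using the maximum principle.

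First I would record the evolution equation. Writing \eqref{eq1.1} as a time-rescaled, conformally-rescaled unnormalized Ricci flow and using the standard identity $\del_\tau \Rc = \Delta_L \Rc$ along the unnormalized flow, a short computation gives, for the normalized flow,
\[
\del_t E = \Delta_L E - 2(n-1) E ,
\]
where $\Delta_L$ is the Lichnerowicz Laplacian of $g(t)$. What matters afterward is only the structure: the equation is \emph{linear} in $E$ (re-expressing the fact that PE metrics are stationary), and after replacing $\Delta_L$ by the rough Laplacian $\Delta$ the remaining zeroth-order terms are schematically of the form $\Rm(g(t)) \ast E$, with coefficients controlled by $|\Rm(g(t))|+1$. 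Next, from $|\Rm(g^0)| \le k_0$ and $|\nabla \Rm(g^0)| \le k_1$ I would invoke Shi's local derivative-of-curvature estimates (applied to the unnormalized flow and rescaled back) to obtain constants $M_0, M_1, M_2$ depending only on $n, k_0, k_1, T$ with $|\Rm(g(t))| \le M_0$ and $|\nabla \Rm(g(t))| \le M_1$ on $[0,T)$ and $|\nabla^2 \Rm(g(t))| \le M_2 t^{-1/2}$ on $(0,T)$; since $E = \Rc + (n-1)g$, this bounds $|E(g(t))|$ and $|\nabla E(g(t))|$ on $[0,T)$ and gives $|\nabla^2 E(g(t))| \le M_2' t^{-1/2}$.

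Differentiating the evolution equation and keeping track of the terms produced by $\del_t g^{-1} \sim E$ and $\del_t \Gamma \sim \nabla E$, I would obtain, with constants depending on $n, k_0, k_1, T$,
\[
(\del_t - \Delta)|E|^2 \le -2|\nabla E|^2 + C|E|^2, \qquad (\del_t - \Delta)|\nabla E|^2 \le -2|\nabla^2 E|^2 + C|\nabla E|^2 + C|E|^2 ,
\]
together with an analogous inequality for $|\nabla^2 E|^2$ whose worst new term, $|\nabla^2 \Rm \ast E| \le C t^{-1/2}|E|$, is harmless once $|E|$ is already controlled and is multiplied by the weight $t$. To close these I would fix a smooth boundary defining function $x$ --- the compactifications $x^2 g(t)$ are uniformly comparable along the flow, since $\frakc(g(t))$ is constant and the geometry is bounded --- so that the asymptotically hyperbolic model computation yields, uniformly in $t$ near $\del M$, $\Delta^{g(t)}(x^{2\gamma}) = -2\gamma(n-1-2\gamma)x^{2\gamma} + \calO(x^{2\gamma+1})$. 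Then for any prescribed positive constant, any $\gamma > 0$, and suitable $\lambda$ large and $\epsilon$ small, $A e^{\lambda t} x^{2\gamma}$ is a supersolution (absorbing both the reaction term and the $\calO(x^{2\gamma})$ forcing) of the relevant scalar operator on $\{x \le \epsilon\} \times [0,T')$; choosing $A$ large enough to dominate the $t=0$ data (this is where $C_0$ enters) and the values of $|E|$, $|\nabla E|$, $t|\nabla^2 E|^2$ on the compact hypersurface $\{x = \epsilon\}$ (this is where $M_0, M_1, M_2$ enter), a maximum principle for bounded subsolutions on complete manifolds forces $|E(g(t))|^2 \le A e^{\lambda t} x^{2\gamma}$ on $[0,T')$. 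Running the same scheme on $|\nabla E|^2$ (using the already-proven bound on $|E|^2$ as forcing) and then on the Bernstein-type combination $a|E|^2 + b|\nabla E|^2 + c\, t\, |\nabla^2 E|^2$, with $a \gg b \gg c$ chosen so the good terms $-2|\nabla E|^2$ and $-2|\nabla^2 E|^2$ absorb all reaction terms, yields the three bounds in \eqref{eq2.9}; the weight $t$ is exactly what produces the $t^{-1/2}$ in the last one, and the final constant $\sim A e^{\lambda T}$ depends only on $n, k_0, k_1, C_0, T$.

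The hard part is the last step. One must run the maximum principle on $\{x \le \epsilon\}$, whose end at $x = 0$ is a complete end of $(M,g(t))$, so a Phragm\'en--Lindel\"of / complete-manifold maximum principle is needed in place of a compact one; and the Bernstein combination must be arranged so that \emph{every} coefficient stays bounded on $[0,T)$ despite the $t^{-1/2}$ blow-up of $|\nabla^2 \Rm|$. The time-exponential factor $e^{\lambda t}$ in the barrier is essential: a purely spatial supersolution $x^{2\gamma}$ fails once the reaction constant exceeds the geometric threshold $\frac{1}{4}(n-1)^2$, so the decay rate can only be propagated at the cost of a constant that degrades with $T$ --- which is precisely the form of the assertion. (These analytic points, and the regularity of the flow needed to justify the computations, are handled in \cite{Bahuaud,QSW} via the mapping properties of the heat operator on weighted parabolic H\"older spaces adapted to the conformally compact structure.)
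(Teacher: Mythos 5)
Your proposal is correct and follows essentially the same route as the paper's (sketched) proof: the evolution equation for $E$ along the normalized flow, Shi-type curvature bounds, and a weighted parabolic maximum principle on the complete manifold to propagate the decay rate $x^{\gamma}$. The only real difference is in implementation: the paper conjugates by the weight (setting $\overline{E} = x^{-\gamma}E$, written $x^{-n}$ in the sketch) and applies the Ecker--Huisken maximum principle to the rescaled quantity, whereas you keep $E$ and compare with the barrier $A e^{\lambda t} x^{2\gamma}$ via a Phragm\'en--Lindel\"of argument, and you additionally spell out the Bernstein-type combinations giving the $\nabla E$ and $t^{-1/2}$-weighted $\nabla^2 E$ bounds, which the paper defers to \cite{QSW}.
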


\begin{proof}[Sketch]
As before, we give only a brief indication, outlining the proof for the estimate of $E = E(g(t))$, and
referring to \cite[Lemma 4.3]{QSW} for more details.

First compute the evolution equation for $|E|^2$ along the flow:
\begin{equation}
\label{eq2.10} \partial_t |E|^2 = \Delta |E|^2 - 2 |\nabla E|^2 + 4 \Rm_{ijkl} E^{il} E^{jk}.
\end{equation}
Shi's well-known estimate \cite{Shi} bounds the curvature $\Rm(g(t))$ by a time dependent constant.  Rescaling $|E|$
by setting $\overline{E} = x^{-n} E$ (this notation will be used in this proof only), then there is a new evolution equation
which involves derivatives of $x$, which must be bounded as well. After some work, we obtain
\begin{equation}
\label{eq2.11}
\partial_t |\overline{E}|^2 \leq \Delta |\overline{E}|^2  + C(T) |\overline{E}|^2.
\end{equation}
Now apply the Ecker-Huisken maximum principle \cite{EH} \emph{mutatis mutandis} to conclude that $|\overline{E}|$
is bounded for some short time; this means simply that $g(t)$ remains APE.
\end{proof}

This result makes no use the fact that $g(t)$ has an expansion for $t > 0$. However, coupling this theorem
with the results of \cite{Bahuaud} and Proposition \ref{prop:APE-expansion}, we obtain our first main theorem,
which we restate more precisely as
\begin{theorem} \label{theorem2.5}
Assume that $g^0$ is an APE metric, and further that $g^0$ has vanishing obstruction tensor if $n$ is odd.
Let $g(t)$ be the corresponding solution to the normalized Ricci flow.
Then $g(t)$ remains APE on some time interval $0 \leq t < T_0$. We may thus write
\begin{equation}
 g(t) = \frac{dx_t^2 + \hat{g}_{x_t}}{x_t^2},\label{eq2.12}
\end{equation}
where $x_t$ is the special boundary defining function associated to $g_0$ and $g(t)$, and
\begin{eqnarray}
g=\begin{cases}x_t^{-2}\big ( dx_t^2+g_{0}+x_t^2g_{2} +\dots +x_t^{n-2}g_{n-2}& \mbox{ } \\
\qquad +x_t^{n-1}g_{n-1}(t)+ \calO(x_t^n)\big )& \mbox{for $n$ even, } \\
\\
x_t^{-2}\big ( dx_t^2+g_{0}+x_t^2g_{2}+\dots
+x_t^{n-3}g_{n-3}& \mbox{ } \\ \qquad +x_t^{n-1} g_{n-1}(t)
+\calO(x_t^{n})\big ) & \mbox{for $n$ odd, }
\end{cases}
\end{eqnarray}
where $\tr^{g_0} g_{n-1}(t) = 0$ when $n$ is even, and $\tr^{g_0} g_{n-1}(t)$ is independent of time when $n$ is odd.
\end{theorem}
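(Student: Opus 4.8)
The plan is to obtain the theorem by assembling, \emph{time-slice by time-slice}, three results already in hand: the persistence of the smooth AH structure and of the conformal infinity under the flow (\cite{Bahuaud}), the persistence of the decay $|E(g)|_g = \calO(x^n)$ (Theorem~\ref{theorem2.4}), and the structural Proposition~\ref{proposition2.2}. The conceptual point that makes this strategy work is that, although the special boundary defining function $x_t$ of the Graham--Lee normal form evolves in the complicated nonlocal way discussed above, the coefficients $g_2,\dots,g_{n-2}$ and $\tr^{g_0} g_{n-1}$ are \emph{universal differential expressions in $g_0$ alone}; since the conformal infinity, and hence a fixed representative $g_0$, is frozen along the flow, these coefficients cannot change, and no control of $\del_t x_t$ is needed. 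Concretely, I would first invoke \cite{Bahuaud}: the metric $g^0$ being APE is in particular smoothly conformally compact, so the normalized Ricci flow $g(t)$ exists and stays smoothly AH on a maximal interval $[0,T)$ with $\frakc(g(t)) = \frakc(g^0)$; fix once and for all $g_0 \in \frakc(g^0)$.

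Next I would check that $g^0$ meets the hypotheses of Theorem~\ref{theorem2.4} with $\gamma = n$. The curvature bounds \eqref{eq2.7} are automatic from smooth conformal compactness, and the bounds \eqref{eq2.8} hold with $\gamma = n$ because $E(g^0)$ inherits a smooth asymptotic expansion from the compactification of $g^0$ whose order of vanishing is dictated by the APE condition $|E(g^0)|_{g^0} = \calO(x^n)$ (equivalently $E(g^0)_{ij} = \calO(x^{n-2})$ in smooth components, cf.\ \eqref{eq2.4}); a routine conformal-rescaling estimate then upgrades this to $|\nabla E(g^0)|_{g^0} = \calO(x^n)$ as well, since differentiation costs at most one power of $x^{-1}$ on components but this is exactly offset by the norm conversion. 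Theorem~\ref{theorem2.4} then gives $|E(g(t))|_{g(t)} = \calO(x^n)$ on an interval $[0,T_0) \subseteq [0,T)$, i.e.\ $g(t)$ is APE for each $t \in [0,T_0)$; and when $n$ is odd $g(t)$ is moreover unobstructed, since the obstruction tensor is the universal expression $\tilde g_{n-1}$ in $g_0$, which vanishes for $g^0$ by hypothesis and hence for every $g(t)$.

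Finally, for each fixed $t \in [0,T_0)$ I would apply Proposition~\ref{proposition2.2} to the APE metric $g(t)$ with the fixed representative $g_0$, obtaining the Graham--Lee form \eqref{eq2.12} with an expansion of the shape \eqref{expg} whose coefficients $g_0, g_2, \dots, g_{n-2}$ are produced by applying universal differential operators to $g_0$; as $g_0$ is $t$-independent, so are these coefficients, which yields the displayed expansions with $t$-independent leading terms, all the $t$-dependence being carried by $x_t$ and by the unconstrained higher-order part. At order $n-1$: for $n$ even Proposition~\ref{proposition2.2} forces $\tr^{g_0} g_{n-1}(t) = 0$; for $n$ odd the absence of a $\log x$ term follows from unobstructedness, while $\tr^{g_0} g_{n-1}(t)$ is, again by Proposition~\ref{proposition2.2}, formally determined by $g_0$ and the earlier ($t$-independent) coefficients, hence constant in $t$ — the trace-free part of $g_{n-1}(t)$ being the genuinely dynamical datum, which is left unconstrained. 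I expect the only real friction to be the bookkeeping in the previous paragraph, namely confirming that ``smooth $+$ APE'' for $g^0$ genuinely delivers the derivative bound $|\nabla E(g^0)|_{g^0} = \calO(x^n)$ needed to run Theorem~\ref{theorem2.4} at the sharp exponent $\gamma = n$ (a smaller exponent would only give a weaker persistence statement); the seemingly dangerous issue, the nonlocal evolution of $x_t$, never enters, precisely because Proposition~\ref{proposition2.2} is applied slice by slice and its coefficient formulas depend on $g_0$ only.
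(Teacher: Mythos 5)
Your proposal is correct and follows essentially the same route as the paper, which obtains Theorem \ref{theorem2.5} precisely by coupling the persistence results of \cite{Bahuaud} (smooth AH structure and fixed conformal infinity along the flow) with Theorem \ref{theorem2.4} applied at the exponent $\gamma=n$ and then applying Proposition \ref{proposition2.2} slice by slice with the fixed representative $g_0$. Your additional bookkeeping --- that smoothness plus the APE condition yields $|\nabla E(g^0)|_{g^0}=\calO(x^n)$ because the loss of one power of $x$ from differentiating components is offset by the norm conversion from $2$- to $3$-tensors --- is a correct verification of the hypotheses \eqref{eq2.7}--\eqref{eq2.8} that the paper leaves implicit.
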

This expansion plays a prominent role in the next section.

As a final comment here, recall that we have been using the result from \cite{Bahuaud} that if $g^0$
has a smooth asymptotic expansion (and in particular if it is APE), then its Ricci evolution $g(t)$
also has a smooth expansion so long as the flow exists.  If $g(t)$ were to exist for all $t > 0$ and converge
to a PE metric, then there would have to be a `jump' in the expansion at $t=\infty$ when $n$ is odd
in order to capture the extra log terms in \eqref{expg}. This is an interesting effect, though one which may be
difficult to analyze precisely. Examples of jumps in the asymptotic structure of the limit metric have previously been observed in Ricci flows on asymptotically conical surfaces \cite{IMS} and, for $n\ge 3$, rotationally symmetric asymptotically flat Ricci flows \cite{OW}.

\section{Renormalized Volume and the Ricci flow}
\setcounter{equation}{0}

\noindent  We now turn to the renormalized volume functional.  Although we noted in \S 1 that
$\RenV$ is defined using Hadamard regularization, we begin by providing an alternative definition
using Riesz regularization. This is entirely equivalent, as explained in \cite{Albin}, and we refer to
that paper for more details.  With the exception of the final Proposition, we assume throughout
this section that $M$ is even-dimensional.

If $g$ is APE, then
\begin{equation}
\label{eq3.1}
\begin{split}
dV_g &= \left( \frac{\det( \hat{g}_x )}{\det(g_0)} \right)^{\frac{1}{2}} \frac{dV_{g_0} dx}{x^n}\\
&= \left( 1 + v_2 x^2 + \; \mbox{even terms} \; + v_{n} x^n + \cdots \right) \frac{dV_{g_0} dx}{x^n};
\end{split}
\end{equation}
where the $v_i$ , $0 \leq i \leq n-1$, are locally determined functions on $\partial M$ which have been studied
closely due to their connection with the $Q$-curvature function, see \cite{CFG}.

Using this expansion, it is straightforward that
\begin{equation}
\label{eq3.2} z \mapsto \zeta_x(z) = \int_M x^z dV_g
\end{equation}
extends meromorphically from $\mbox{Re}\,(z) > n-1$ to the whole complex plane with simple poles at
$\zeta_j = n-1-j$, $j = 0, 1, 2, \ldots$.  The Riesz regularization of volume is then defined as the
{\it finite part} of this function at $z = 0$:
\begin{equation}
\label{eq3.3} \RenV(M,g) := \FP \; \; \zeta_x(z).
\end{equation}

In the following, write
\begin{equation}
\label{eq3.4} x_t =: e^{\omega_t} x\ ,
\end{equation}
and let an overdot denote $\pd{t}$. Note that we are not evaluating this
derivative at $t=0$, which is a departure from the calculations in \cite{Albin}.

We now begin the proof of Theorem \ref{TheoremB}. To study the renormalized volume under the Ricci flow we compute
\begin{align}
\label{eq3.5} \pd{t} \FP \; \int_M x_t^z \, dV_{g_t} &= \FP \int_M z x^{z-1}_t \dot{x} dV_{g_t} + \FP \int_M x^z \left( \frac{1}{2}
\tr^{g(t)} \dot{g} \right) \, dV_{g_t}.
\end{align}

The second term is easier to understand. Indeed, contracting the flow equation
\begin{equation}
\label{eq3.6} \partial_t g = - 2 (\Rc(g) + (n-1) g)
\end{equation}
with $g$ shows that $\tr^{g} \dot{g} = -2(\Sc + n(n-1))$ and by the APE condition, this has pointwise
norm decaying like $\calO(x^n)$. Since $dV_g \sim x^{-n}$, this shows that
\begin{equation}
\label{eq3.7} \FP \int_M x^z \left( \frac{1}{2} \tr^{g(t)} (\partial_t g) \right) \, dV_{g_t}= - \int_M (\Sc + n(n-1))\,  dV_g.
\end{equation}

Now turn to the first term in \eqref{eq3.5}. We will show this integral vanishes. Because several constants appearing
here depend on $t$, this proof only shows that the result only holds for $t$ in a sufficiently small interval,
perhaps much less than the overall time of existence of the solution. For simplicity, we still denote the
time of existence by $T$ but shall later set the value of $T$ to be small. In any case,  all of the compactified
metrics $\gbar_t$ are uniformly equivalent for $0 \leq t \leq T$ and
\begin{equation}
\label{eq3.8} 0 < c_T \leq e^{\omega_t} \leq C_T
\end{equation}
for some $c_T < C_T$.

We require some knowledge about the expansion of $\omega_t$ with respect to the original boundary
defining function $x$.
\begin{lemma}\label{Lemma3.1}
If $g(t)$ is APE for $0 \leq t \leq T$, then
\begin{equation}
\label{eq3.9} \dot \omega_t(x,y,t) = \dot\omega_{(n)}(y,t) x^n + \calO_T(x^{n+1});
\end{equation}
here the notation $\calO_T$ indicates that the constants depend on $T$.
\end{lemma}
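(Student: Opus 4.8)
\noindent The plan is to differentiate, with respect to $t$, the eikonal equation characterizing the special boundary defining function $x_t$, obtaining a transport equation for $\dot\omega_t$ whose right-hand side is controlled by the APE decay of $E(g(t))$. Before doing so I would record the needed regularity and boundary normalization of $\omega_t$. By \cite{Bahuaud} the conformal infinity is preserved by the flow, so $g_0$ is a representative of $\frakc(g(t))$ for every $t\in[0,T]$ and $x_t$ is the associated Graham--Lee special defining function; since $g(t)$ is smoothly conformally compact, $x_t$ is smooth up to $\partial M$, hence so is $\omega_t=\log(x_t/x)$, and --- using the smooth dependence of the Graham--Lee construction on the metric and of the flow on $t$ --- $\omega_t$ is smooth jointly in $(x,y,t)$. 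Next, $\omega_t$ vanishes on $\partial M$: writing $\gbar(t)=x^2g(t)$ for the compactification in the fixed gauge, the flow equation gives $\partial_t\gbar(t)=-2x^2E(g(t))$, and $|x^2E(g(t))|_{\gbar(t)}=|E(g(t))|_{g(t)}=\calO_T(x^n)$ by Theorem \ref{theorem2.5}, so $\partial_t\gbar(t)$ vanishes on $\partial M$; since $\gbar(0)|_{T\partial M}=g_0$ (as $x$ is the special defining function for $g^0$), this forces $\gbar(t)|_{T\partial M}=g_0$ for all $t$. On the other hand, by construction of the special defining function, $x_t^2g(t)|_{T\partial M}=e^{2\omega_t}\,\gbar(t)|_{T\partial M}=g_0$, so $\omega_t|_{\partial M}=0$; in particular $\omega_t=\calO_T(x)$ and $\dot\omega_t|_{\partial M}=0$.

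With these in hand I would differentiate the defining relation. In the collar where the Graham--Lee normal form $g(t)=x_t^{-2}(dx_t^2+\hat g_{x_t})$ holds, $x_t$ is characterized by $|d\log x_t|^2_{g(t)}\equiv 1$. Differentiating in $t$, using $\partial_t g^{ab}=2E(g(t))^{ab}$ together with the identity $\partial_t\log x_t=\partial_t\omega_t=\dot\omega_t$ (valid because $x$ is $t$-independent), yields
\[
\bigl(\nabla^{g(t)}\log x_t\bigr)\,\dot\omega_t=-\,E(g(t))\bigl(\nabla^{g(t)}\log x_t,\nabla^{g(t)}\log x_t\bigr).
\]
In the normal form one computes $\nabla^{g(t)}\log x_t=x_t\,\partial_{x_t}$, which is a $g(t)$-unit vector field transverse to $\partial M$, so the right-hand side is bounded in absolute value by $|E(g(t))|_{g(t)}=\calO_T(x^n)$, and the equation reads $x_t\,\partial_{x_t}\dot\omega_t=\calO_T(x^n)$. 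Integrating in $x_t$ from the boundary, where $\dot\omega_t=0$, gives $\dot\omega_t=\calO_T(x^n)$. Finally, since $\dot\omega_t$ is smooth up to $\partial M$ and vanishes there to order $n$, its Taylor expansion in $x_t$ --- equivalently in $x$, since $\omega_t=\calO_T(x)$ makes $x_t^n=x^n+\calO_T(x^{n+1})$ --- has no terms of order below $n$, i.e.\ $\dot\omega_t=\dot\omega_{(n)}(y,t)x^n+\calO_T(x^{n+1})$, which is the assertion.

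The main obstacle is not the computation but the preliminary bookkeeping: one must be confident that $\omega_t$, and hence $\dot\omega_t$, is genuinely smooth up to $\partial M$ and jointly smooth in $t$, so that both the $t$-differentiation of the eikonal equation and the final Taylor-expansion step are legitimate; this rests on the smooth conformal compactness of $g(t)$ (from \cite{Bahuaud} and Theorem \ref{theorem2.5}) and the smooth dependence of the special defining function on its metric. Once that is secured the transport equation collapses cleanly, because $\nabla^{g(t)}\log x_t$ has unit $g(t)$-length and $\partial_t\log x_t=\dot\omega_t$, and the APE decay $|E(g(t))|_{g(t)}=\calO(x^n)$ supplies exactly the order of vanishing needed.
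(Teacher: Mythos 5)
Your argument is correct, and although it starts from the same identity as the paper---differentiating the Graham--Lee normalization $|dx_t|^2_{x_t^2 g(t)}=1$ in $t$, which is exactly \eqref{eq3.11}--\eqref{eq3.12} written in terms of the unit covector $d\log x_t$---you extract the conclusion by a genuinely different mechanism. The paper splits $d\log x_t=x^{-1}dx+d\omega_t$, estimates the resulting terms separately, and runs an induction on the order of vanishing of $\omega_t$ in its $x$-expansion, concluding $\dot\omega_{(k)}=0$ for $k<n$ (and, implicitly using $\omega_t|_{t=0}\equiv 0$, that $\omega_{(k)}\equiv 0$), so that $\omega_t$, and hence $\dot\omega_t$, begins at order $x^n$. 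You instead keep $d\log x_t$ intact, identify the left-hand side as the exact radial derivative $(x_t\partial_{x_t})\dot\omega_t$ in the evolving normal form, bound the right-hand side by $|E(g(t))|_{g(t)}=\calO_T(x^n)$ since $|d\log x_t|_{g(t)}=1$, and integrate from the boundary using $\dot\omega_t|_{\partial M}=0$, which you correctly derive from the fact that the induced boundary representative is unchanged along the flow. This avoids the induction entirely and needs only the boundary value of $\omega_t$, not its interior smallness; the trade-off is that you must work in the $t$-dependent normal-form coordinates to perform the integration, and you do not recover the paper's stronger byproduct that $\omega_t$ itself is $\calO_T(x^n)$ (i.e., that the special defining function moves only at order $n$), though that extra information is not needed for the lemma as stated nor for the residue computation in \eqref{eq3.16}. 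Your preliminary bookkeeping (smoothness of $\omega_t$ and $\dot\omega_t$ up to $\partial M$, joint smoothness in $t$, and a collar of uniform size on $[0,T]$) is the same implicit input the paper relies on when it writes and $t$-differentiates its expansions, so it introduces no gap beyond the paper's own level of rigor.
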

\begin{proof}
Suppose inductively that
\begin{equation}
\omega_t = \omega_{(k)}(y,t) x^k + \calO_T(x^{k+1}).
\label{induct}
\end{equation}
Since $\omega_t(0,y,t) = 0$, we may take $k \geq 1$ initially.

Now
$d \omega_t = \omega_{(k)}(y,t) k x^{k-1} dx + \calO_T(x^{k})$,
and hence
$d \dot\omega_t = \dot \omega_{(k)}(y,t) k x^{k-1} dx + \calO_T(x^{k})$,
where the error terms are measured in pointwise norm with respect to any one of the compactified metrics
$\olg(t)$.  Following \cite[Lemma 5.2]{Albin}, differentiating the relation
\begin{equation}
\label{eq3.11} 1 = |dx_t|^2_{x^2_t g_t} = x^{-2} {g_t} (dx, dx) + 2x^{-1} {g_t} (dx, d\omega_t) + {g_t} (d\omega_t, d\omega_t)
\end{equation}
yields, after some rearrangement, that

\begin{align}
\label{eq3.12} g_t(dx,d\dot\omega_t) +  x g_t(d \omega_t, d\dot\omega_t) &
= x^{-1} E_t(dx, dx) + 2 E_t(dx, d\omega_t) + x E_t(d\omega_t, d\omega_t).
\end{align}
We rewrite this further using the compactified metrics as
\begin{align}
\label{eq3.13} \gbar_t(dx,d\dot\omega_t) &= x^{-1} x^2_t E_t(dx, dx)
+ 2 x_t^2 E_t(dx, d\omega_t) + x x_t^2 E_t(d\omega_t, d\omega_t)
- x \gbar_t(d \omega_t, d\dot\omega_t).
\end{align}
Recall that $C_1 x \leq x_t \leq C_2 x$ and
\begin{equation}
\label{eq3.14} |E_t(dx,dx)|_{g_t} \leq |E_t|_{g_t} |dx|^2_{g_t} = \calO(x^{n-2}).
\end{equation}
Hence the terms involving $E_t$ are all $\calO(x^{n-1})$, while the final term on the right is $\calO(x^{2k-1})$.

Finally, using the inductive hypothesis for $\omega_t$ on the left, then
\begin{equation}
\label{eq3.15} \dot \omega_{(k)} = \calO(x^{n-k}) + O(x).
\end{equation}
Thus for $k < n$ we deduce that $\dot\omega_{(k)} = 0$, so \eqref{induct} holds with $k$ replaced by $k+1$.
\end{proof}

Now return to the first integral in the main computation. We find that
\begin{equation}
\label{eq3.16}
\begin{split}
& \quad \FP  \int_M z x^{z-1}_t \dot{x} dV_{g_t} = \FP \int_M z x^{z}_t \dot{\omega}_t dV_{g_t} = \Res \int_M x^{z}_t \dot{\omega}_t dV_{g_t} \\
&= \Res \left[ \left( \int_0^{\vep} \int_{\partial M} x^{z}_t \dot{\omega}_t
\left( 1 + v_2 x^2 + \; \mbox{even terms} \; +v_{n} x^n + \cdots \right) \frac{dV_{g_0} dx}{x^n} \right) + C(\vep) \right] \\
&= 0.
\end{split}
\end{equation}
The key point in this final equality is that $\dot \omega_t$ cancels the singularity in the volume measure, so the residue vanishes.
This concludes the proof of formula (\ref{eq1.2}).

To derive the second variation formula (\ref{2ndderiv}), we use
the evolution formula for $\tr^{g(t)}E(g(t))$, which is
\begin{equation}
\label{eq3.17}
\left ( \frac{\partial}{\partial t}-\Delta \right ) \textrm{tr}^{g(t)}E(g(t))
=2\vert E(g(t))\vert^2-2(n-1)\textrm{tr}^{g(t)}E(g(t))\ .
\end{equation}
Differentiating (\ref{eq1.2}) along the flow yields
\begin{equation}
\label{eq3.18}
\frac{d^2\, }{dt^2}{\RenV}(M,g(t))=-\int_M \left [ \frac{\partial }{\partial t}\left ( \tr^{g}E\right )
+\frac12 \left ( \tr^g E\right ) g^{ij} \frac{\partial g_{ij}}{\partial t} \right ] \, dV_g\ .
\end{equation}
Plugging \eqref{eq3.17} into \eqref{eq3.18} and simplifying, we obtain that
\begin{equation}
\label{eq3.19} \frac{d^2\, }{dt^2}{\RenV}(M,g(t))
=-\int_M \left [ 2\left \vert E \right \vert^2 -2(n-1)\left ( \tr^g E\right ) -\left ( \tr^g E\right )^2\right ]\, dV_g\ .
\end{equation}
There is a term at $\del_\infty M$ in this integration by parts which vanishes because the APE condition implies
that $\tr^g E$ and $x\del_x \tr^g E$ are both  $\calO(x^n)$. This concludes the proof of Theorem \ref{TheoremB}.

Theorem \ref{TheoremC} is proved, as in \cite{BW}, by noting that if $\tr^{g(0)}E(0) \geq 0$, then the maximum principle yields
positivity of $E(t)$ so long as the flow exists.

\begin{prop}\label{Proposition3.2}
If $g(t)$ is an AH solution to the Ricci flow, and if $\tr^{g(0)}E(0) \geq 0$ then $\tr^{g(t)}E(t) \geq 0$.
\end{prop}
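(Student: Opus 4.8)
The plan is to combine the evolution equation \eqref{eq3.17} with the parabolic maximum principle, the only real work being to make that maximum principle rigorous on the noncompact manifold $M$.

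First I would record the relevant differential inequality. Write $u(\cdot,t) := \tr^{g(t)}E(g(t)) = \Sc(g(t)) + n(n-1)$. Since $|E(g(t))|^2 \ge 0$, equation \eqref{eq3.17} gives $(\del_t - \Delta)u \ge -2(n-1)u$. To remove the zeroth-order term, set $w := e^{2(n-1)t}u$; a one-line computation gives
\[
(\del_t - \Delta)w = e^{2(n-1)t}\bigl[2(n-1)u + (\del_t-\Delta)u\bigr] = 2e^{2(n-1)t}\,|E(g(t))|^2 \ge 0,
\]
so $w$ is a supersolution of the heat equation on $M\times[0,T_0)$, and $w(\cdot,0) = \tr^{g^0}E(g^0)\ge 0$ by hypothesis. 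If $M$ were compact the minimum principle would finish it at once.

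To handle the noncompactness I would run an exhaustion argument. Fix $T<T_0$ and a boundary defining function $x$ with $\olg = x^2 g^0$ smooth on $\Mbar$. Since $g^0$ is a smooth AH metric it has bounded geometry and satisfies $|E(g^0)|_{g^0}, |\nabla E(g^0)|_{g^0} = \calO(x^\gamma)$ for some $\gamma>0$; Theorem \ref{theorem2.4} then furnishes a constant $C_T$, independent of $t\in[0,T]$, with $|E(g(t))|_{g(t)}\le C_T x^\gamma$, hence $|u(\cdot,t)|\le \sqrt n\,C_T x^\gamma$ and $|w(\cdot,t)|\le e^{2(n-1)T}\sqrt n\,C_T x^\gamma =: C'_T x^\gamma$ on all of $M\times[0,T]$. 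For $\vep>0$ let $M_\vep := \{x\ge\vep\}$, a compact manifold with boundary $\{x=\vep\}$. Applying the parabolic minimum principle to the supersolution $w$ on $M_\vep\times[0,T]$: on the initial face $w\ge 0$, and on the lateral face $w\ge -C'_T\vep^\gamma$, so $w\ge -C'_T\vep^\gamma$ throughout $M_\vep\times[0,T]$. Given any $p\in M$ and $t_0\in[0,T]$, choose $\vep<x(p)$ so that $p\in M_\vep$; letting $\vep\to 0$ yields $w(p,t_0)\ge 0$, hence $u(p,t_0)\ge 0$. Since $T<T_0$ was arbitrary, $\tr^{g(t)}E(g(t))\ge 0$ throughout the interval of existence.

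The main (and essentially only) obstacle is the noncompactness: one must know that $|E(g(t))|_{g(t)}$ decays at $\del M$ \emph{uniformly in $t$} on compact time intervals, so that the barrier $C'_T\vep^\gamma$ on the exhausting hypersurfaces $\{x=\vep\}$ tends to zero as $\vep\to0$; this uniform decay is exactly the content of Theorem \ref{theorem2.4} (note that for a general AH metric the weaker decay $\gamma$ close to $0$ already suffices for the argument). Alternatively, one could invoke an Ecker--Huisken type maximum principle \cite{EH} directly on $M$, using Shi's estimates \cite{Shi} to control the geometry of $g(t)$, as in the proof sketch of Theorem \ref{theorem2.4}.
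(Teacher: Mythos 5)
Your proposal is correct and follows essentially the same route as the paper: the evolution equation \eqref{eq3.17}, the uniform-in-time decay of $E(g(t))$ at $\del M$ (Theorem \ref{theorem2.4}), and a parabolic minimum principle applied on the compact exhaustion $\{x \ge \vep\}\times[0,T]$. Your exponential rescaling $w=e^{2(n-1)t}u$ and the explicit barrier $-C'_T\vep^\gamma$ on the lateral boundary are a slightly more careful packaging of the paper's contradiction argument (which instead exploits the favorable sign of $-2(n-1)\tr^g E$ at a negative minimum), but the substance is the same.
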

\begin{proof}
Recall that if $g$ is AH, then
\begin{equation}
\label{eq3.20} \tr^g E(g) = \Sc(g) + n(n-1) = \calO(x),
\end{equation}
which evolves according to equation (\ref{eq3.17}).  Note that this is much weaker decay than that of an APE metric.

Suppose by way of contradiction that
$\displaystyle \inf_{p \in M} \tr^{g(t)}E(g(t))(p,t_*) <0$
for some $t_* < T$.   There is a constant $C = C(t_*)$ such that
\begin{equation}
\label{eq3.21}\left \vert \tr^{g(t)}E(g(t))\right \vert \equiv \left \vert \Sc(g(t))+ n(n-1)\right \vert \leq C\, x.
\end{equation}

The classical parabolic minimum principle implies that
$\tr^{g(t)}E(g(t))$ cannot attain a negative minimum in the
region $\{ x > 0 \} \times [0,t_*]$.  Indeed, at such a point,
$\Delta \tr^{g(t)}E(g(t))\ge 0$,
so the right side of
equation (\ref{eq3.17}) is strictly positive, while the left side is nonpositive.

This leaves the possibility that a negative minimum for $\tr^{g(t)}E(g(t))$ occurs at $x=0$.  Choose an exhaustion of $M \times [0,t_*]$
by compact sets of the form $M_k ~=~\{~x~\geq~\frac{1}{k}~\} \times [0,t_*]$.  We know that $\Sc(g) + n(n-1)$ cannot
attain a negative minimum in the interior $M_k$, and must be at least nonnegative on its boundary, at $x = 1/k$. Taking a limit,
we see that $\tr^{g(t)}E(g(t))\geq 0$ everywhere,
which is a contradiction.
\end{proof}

Using this in \eqref{eq1.2} gives monotonicity, and hence completes the proof of the main statement of Theorem \ref{TheoremC}.
Finally, from Proposition \ref{Proposition3.2},  since $\tr^{g(t)}E(g(t))$ is nonnegative for $t\in (t_1,t_2)$, if it is not everywhere
zero, then by \eqref{eq1.2} $\RenV((M,(g(t_2))<\RenV((M,(g(t_1))$. This proves the final statement of Theorem \ref{TheoremC}.

\begin{proof}[Proof of Corollary \ref{CorollaryD}]
If $(M,g(t_1))$ and $(M,g(t_2))$ are isometric, then their renormalized volumes are equal. Since $\Sc(g^0)\ge -n(n-1)$,
Theorem \ref{TheoremC} gives that $\Sc(g(t)) + n(n-1) \equiv 0$. But the evolution equation \eqref{eq3.17} for $\tr^{g(t)}E(g(t))$
implies that the full Einstein tensor $E(g(t))\equiv 0$ for all $t$, so that $g(t)$ is Einstein and stationary.

Finally, by Qing's rigidity theorem \cite{Qing}, any conformally compactifiable Einstein manifold of dimension $n<8$
whose conformal infinity is a round sphere must be ${\mathbb H^n}$.
\end{proof}

We conclude this section with a few remarks about the odd-dimensional case, referring the reader to \cite{Gr} for more detail.
The regularization scheme outlined above now produces a log term in the volume expansion, called the conformal anomaly,
which is a conformal invariant of the boundary conformal class.  The renormalized volume is still the constant term in the
expansion however this term is no longer conformally invariant.  Indeed given two representatives $g_0$, $g_0'$ of $\frakc(g)$,
where $g_0' = e^{2 \Upsilon} g_0$ for a smooth conformal factor $\Upsilon$, the difference in renormalized volumes computed
with respect to the special boundary defining function associated to each conformal representative is now a boundary integral
of a polynomial nonlinear differential operator in $\Upsilon$.

We conclude this section with an easy consequence of our expansions.

\begin{prop}\label{RemarkConformalAnomaly}\label{Lemma3.3}
When $n$ is odd and $(M, g^0)$ is an unobstructed APE, then the conformal anomaly term in the expansion
of $g(t)$ is constant along the flow.
\end{prop}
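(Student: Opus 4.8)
The plan is to identify the conformal anomaly with a boundary integral of a single coefficient in the volume-density expansion \eqref{eq3.1} and then read off its time-independence directly from Theorem~\ref{theorem2.5}. Recall first that, for $n$ odd, the regularization of $\int_M dV_{g(t)}$ produces a $\log$ term whose coefficient --- the conformal anomaly --- is, up to a fixed normalization, $\Res \zeta_{x_t}(z) = \int_{\partial M} v_{n-1}\,dV_{g_0}$, where $v_{n-1}$ is the coefficient of $x_t^{n-1}$ in the density expansion of $g(t)$ written in the Graham--Lee normal form relative to the fixed representative $g_0 \in \frakc(g^0) = \frakc(g(t))$ (here $n-1$ is even, so $v_{n-1}$ is among the ``even terms'' of \eqref{eq3.1}). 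It therefore suffices to show that $v_{n-1}$ does not depend on $t$.

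To that end I would expand the density explicitly. By Theorem~\ref{theorem2.5}, and using crucially that $g^0$ is an \emph{unobstructed} APE so that no $\log x_t$ term intervenes, the normal form of $g(t)$ has the even expansion $\hat g_{x_t} = g_0 + x_t^2 g_2 + \dots + x_t^{n-3} g_{n-3} + x_t^{n-1} g_{n-1}(t) + \calO(x_t^n)$. Writing $\big(\det \hat g_{x_t}/\det g_0\big)^{1/2} = \big(\det(I + g_0^{-1}(\hat g_{x_t} - g_0))\big)^{1/2}$ and expanding in $x_t$, the coefficient of $x_t^{n-1}$ sees $g_{n-1}(t)$ only linearly and only through $\tr^{g_0} g_{n-1}(t)$: a quadratic or higher occurrence of $g_{n-1}(t)$ contributes only at order $\ge 2(n-1) > n-1$, while the term of $(\det(I+A))^{1/2}$ linear in $A$ is precisely $\tfrac12 \tr A$. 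Every other contribution at order $x_t^{n-1}$ is a product of the lower coefficients $g_2,\dots,g_{n-3}$. Hence $v_{n-1} = \tfrac12 \tr^{g_0} g_{n-1}(t) + P\big(g_0; g_2, \dots, g_{n-3}\big)$ for a universal polynomial $P$ in $g_0^{-1}$ and $g_2,\dots,g_{n-3}$.

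Finally I would invoke Theorem~\ref{theorem2.5} once more: along the flow $g_0$ is fixed, the coefficients $g_2,\dots,g_{n-3}$ are given at every time by the \emph{same} universal differential expressions in $g_0$, and $\tr^{g_0} g_{n-1}(t)$ is independent of $t$. Consequently $v_{n-1}$ --- indeed, as a function on $\partial M$ --- is independent of $t$, and so the conformal anomaly $\int_{\partial M} v_{n-1}\,dV_{g_0}$ is constant along the flow. The real content of the statement is carried by Theorem~\ref{theorem2.5}; the only step requiring genuine care is the order-and-parity bookkeeping above, which rules out any contribution to $v_{n-1}$ from the one genuinely $t$-dependent piece of the expansion, namely the trace-free part of $g_{n-1}(t)$. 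A more ``dynamical'' route would instead differentiate the residue in \eqref{eq3.16} in $t$ and check that it vanishes, but the static identification seems cleaner.
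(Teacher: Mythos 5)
Your proposal is correct and follows essentially the same route as the paper: identify the conformal anomaly with $\int_{\partial M} v_{n-1}\,dV_{g_0}$ and conclude from Theorem \ref{theorem2.5} that this coefficient is $t$-independent. The only difference is that you spell out the determinant-expansion bookkeeping showing $v_{n-1}$ sees $g_{n-1}(t)$ only through its $g_0$-trace (so the possibly time-dependent trace-free part cannot contribute), a point the paper leaves implicit when it asserts the volume coefficients up to $v_{n-1}$ are independent of $t$.
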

\begin{proof}
From \cite{Gr}, the conformal anomaly is $\int_{\partial M} v_{(n-1)} \, dV_{g_0}$.  The conclusion follows since the volume coefficients
up to $v_{(n-1)}$ are independent of $t$ by Theorem \ref{theorem2.5}.
\end{proof}

\section{Discussion}
\setcounter{equation}{0}

\noindent The black holes that participate in the Hawking-Page phase transition \cite{HP} of black hole thermodynamics
are Poincar\'e-Einstein manifolds. Below, we compute the renormalized volumes of these black holes (in $4$-dimensions) and find that the renormalized volumes equal the free energies (relative to thermal hyperbolic space, see below). This is remarkable in that the free energies are computed with a regularization method that requires a global subtraction of two metrics, while renormalized volumes are computed via the intrinsic regularization of Hadamard or, equivalently, of Riesz.

Consider $S^2\times S^1$ with metric $\gamma$, the product of the
standard (curvature equal to $+1$) metric on $S^2$ and a circle of length $\beta$. There are three well-known
Poincar\'e-Einstein metrics for which $(S^2\times S^1, [\gamma])$ is the conformal infinity. Two of these,
the so-called {\it large} and {\it small black hole} metrics, $g_{\mathrm{lbh}}$ and $g_{\mathrm{sbh}}$, are warped
product metrics on the bulk manifold $M_1 = S^2 \times \mathbb R^2$, while the third, $g_h$, called
\emph{thermal hyperbolic space} in physics, is the hyperbolic metric on $M_2 = B^3 \times S^1$, realized as
the quotient of $\mathbb H^4$ by a hyperbolic dilation with translation distance determined by $\beta$.
The parameter $1/\beta$ is called the {\it temperature} (in units of the Boltzmann constant). It is related to the temperature measured by a static (thus non-inertial, accelerated) observer at a point in the manifold by a redshift \cite[equation (2.4)]{HP}.

Now recall the formula of Anderson \cite{Anderson}, see also \cite{Albin}, for renormalized volume of
PE spaces
\begin{equation}
\label{eq4.1}\RenV(M,g) = \frac{4\pi^2}{3}\chi(M)-\frac{1}{24} \int_M \left ( \vert {\rm Rm} \vert^2 -4|Z|^2-24 \right ) dV_g.
\end{equation}
As an aside, this formula generalizes immediately to APE spaces and, as explained in the introduction, our formula \eqref{eq1.2} for $\del_t \RenV$ in $4$ dimensions can also be obtained
by differentiating \eqref{eq4.1}, as in \cite{Anderson}. Equation (\ref{eq1.2}) can also be obtained via a closely related version of this calculation which uses an identity due to Berger \cite{Berger} along with the APE condition to
eliminate the boundary terms arising in subsequent integrations by parts. Here $\chi(M)$ is the Euler characteristic
and $Z$ the tracefree Ricci tensor. We note that \cite{Anderson} uses different conventions so the constants
in that paper are different.

Using \eqref{eq4.1} we obtain that
\begin{equation}
\label{eq4.2} \RenV(M_2, g_h) = 0,
\end{equation}
since all the terms on the right of (\ref{eq4.1}) vanish for this metric, while
\begin{equation}
\label{eq4.3}
\RenV(M_1, g_{\mathrm{lbh/sbh}})= \frac{8\pi^2}{3} \frac{a_\pm^2(1-a_\pm^2)}{(1+3a_\pm^2)}.
\end{equation}
The black hole horizon radii $a_- < a_+$ are the two roots of the quadratic equation (see \cite[equation (2.7)]{HP})
\begin{equation}
\label{eq4.4} 3a^2-\frac{4\pi}{\beta}a+1=0 \ .
\end{equation}
In fact there are two distinct real roots only when $\beta^2>\frac{4\pi^2}{3}$, which we
always assume. In any case, when $\beta < \pi$, which corresponds to high temperature, then
\begin{equation}
\label{eq4.5} \RenV(M_1, g_{\mathrm{lbh}}) < \RenV(M_2, g_h) = 0 < \RenV(M_1, g_{\mathrm{sbh}})
\end{equation}
On the other hand, thermal hyperbolic space is the minimizer of $\RenV$ in the low temperature regime $\beta>\pi$.

Up to an overall normalization to account for physical units, \eqref{eq4.3} (or equally, the difference between \eqref{eq4.3}
and \eqref{eq4.2}) is precisely the formula \cite[eqn (2.9)]{HP} for the regularized action, computed there using the ad hoc regularization procedure of taking the difference between the gravitational action of a black hole with a distance cut-off and that of thermal hyperbolic space, also with a cut-off, and then taking the limit in which the cut-off goes to infinity. Furthermore, a simple calculation using (\ref{eq4.4}) and the relation $a^3+a-2m=0$ between the horizon radius and the mass $m=m_{\mathrm{lbh/sbh}}$ of the (large or small) black hole allows us to re-write the right hand side of (\ref{eq4.3}). This yields the familiar thermodynamic relation
\begin{equation}
\label{eq4.6} \frac{3}{8\pi\beta}\RenV(M_1, g_{\mathrm{lbh/sbh}})=\langle E \rangle -\frac{1}{\beta} S\ ,
\end{equation}
where $S=\pi a_{\pm}^2$ is the entropy and $\langle E \rangle$, the expectation value of the internal energy, can be computed by a standard relation in the canonical ensemble which yields $\langle E \rangle= m_{\mathrm{lbh/sbh}}$. Thus, in this example at least, the renormalized volume functional has a thermodynamic interpretation. By evaluating it on these black hole metrics and multiplying the results by $3/8\pi\beta$, one can find both the free energy liberated in the phase transition and the energy barrier to be surmounted in order for the transition to proceed.

We next discuss the role played by APE flows. Equation \eqref{eq1.4} shows that a PE manifold is marginally linearly stable in the space of APE perturbations. (By contrast, $\RenV$ is strictly unstable at any non-Einstein constant scalar curvature metric). It is well-known \cite{Prestidge} that the small black hole metric $g_{\mathrm{sbh}}$ is linearly unstable in the sense
that its Lichn\'erowicz Laplacian $\Delta_L$ has a negative eigenvalue, and hence it is also linearly unstable for the Ricci flow, since the linearized Ricci-DeTurck flow is $\partial_t -\Delta_L$, see \cite{HW}. Thus, a small perturbation of this PE metric produces a nearby metric with the same renormalized volume to second order and which is not Einstein, which then evolves under the full (normalized) Ricci flow \eqref{eq1.1} so that its renormalized volume strictly decreases.

The case of black holes with boundary, which has been studied numerically \cite{HW}, presents an analogous picture.
Prescribing appropriate Dirichlet conditions, the boundary can be filled by two different black holes, and also by so-called
hot flat space (${\mathbb R}^3\times S^1$ with a flat metric). In that case, the Einstein-Hilbert action is largest at the
small black hole, and this is (linearly) unstable for Ricci flow. The numerical flows can be followed all the way from
a small perturbation of the small black hole to either the large black hole or to hot flat space. In the latter space
there must be a surgery to account for the change in topology. The flow is a gradient flow, but with respect to a metric
of indefinite signature, so existence of monotonic quantities is not generally expected. Nonetheless, monotonicity of the
Einstein-Hilbert action is observed numerically along these flows, and is used to construct what is referred to
in \cite{HW} as a ``novel'' free energy diagram for the phase transition in that setting.

Our analytical results are consistent with these numerical and thermodynamic arguments, in the setting of APE manifolds rather than manifolds with boundary. The APE flow is also a gradient flow (whose potential is the renormalized volume) with respect to an indefinite metric on the space of APE metrics. And
although the metric is still indefinite, Theorem \ref{TheoremC} ensures that the renormalized volume is monotone along the flow.

In the introductory section, we briefly mentioned certain mathematical generalizations on which we are working. We now close by taking note of some physics questions that this discussion raises, and which we hope will motivate future work.

The Hawking-Page transition occurs in all dimensions, and plays an important role in the AdS/CFT correspondence \cite{Witten},
which naturally applies in $5$ dimensions. However, renormalized volume appears to differ from the Hawking-Page regularization \cite[equation (2.18)]{Witten} in $5$ dimensions. Certain formulas facilitate the computation of $\RenV$ in all even
dimensions, see \cite{Albin} and \cite{CQY}. It would be interesting to compare the Hawking-Page free energy
to $\RenV$ in these dimensions. Also, in the case of toral black holes, where a Hawking-Page transition \cite{SSW} has also been observed \cite{SSW}, one can easily compute ${\RenV}$ in all dimensions, and indeed in that case the Hawking-Page regularized action equals ${\rm RenV}$ in all dimensions.

Headrick and Wiseman \cite{HW}, in their study of Schwarzschild black holes in a finite cavity, have postulated that Ricci flow leads to a novel free energy diagram for black hole phase transitions. This was based on numerical evidence for a monotonic quantity (the action), combined with a gradient flow argument taking into account the indefinite metric signature. Our setting places the monotonicity argument on a rigorous analytical footing, because it replaces the finite boundary of the cavity with the APE asymptotic condition. However, to carry through the full Headrick-Wiseman argument, we would need to compute the renormalized volume \emph{off-shell}. Then equation (\ref{eq4.4}) does not hold---instead, these black hole metrics have cone points in the ${\mathbb R}^2$ factor. We were unable to carry this through. It would be very interesting to do so, either analytically or numerically. As well, it would of course be very interesting to study numerically the Ricci flow of APE metrics for initial data near the small black hole.

Finally, many phase transitions admit a modern description in terms of renormalization group flows. For example, the
ferromagnetic 2-dimensional Ising model has a phase transition described by ``block spin renormalization'' which
interpolates between the macroscopic ordered and disordered states of the ferromagnet. It is certainly suggestive
that the dynamics of the Hawking-Page phase transition may have a Ricci flow description. This would fit with the well-known
fact that Ricci flow is, to one loop, the renormalization group flow for a 2-dimensional sigma model,
and hence also approximately for closed strings \cite{Friedan}, the fundamental excitations thought to give rise to collective states such as black holes.

\end{document}